\DeclareFontFamily{OT1}{rsfs}{}
\DeclareFontShape{OT1}{rsfs}{n}{it}{<-> rsfs10}{}
\DeclareMathAlphabet{\mathscr}{OT1}{rsfs}{n}{it}
\begin{document}
\theoremstyle{plain}

\newtheorem{theorem}{Theorem}
\newtheorem{thm}{Theorem}
\newtheorem{prop}{Proposition}
\newtheorem{cor}{Corollary}
\newtheorem{conj}{Conjecture}
\newtheorem{lemma}{Lemma}
\newtheorem{definition}{Definition}
\theoremstyle{definition}
\newtheorem{question}{Question}

\theoremstyle{definition}
\newtheorem{remark}{Remark}
\theoremstyle{definition}
\newtheorem{example}{Example}

\newcommand{\Hecke}{\mathcal{H}}
\newcommand{\Liea}{\mathfrak{a}}
\newcommand{\Cmg}{C_{\mathrm{mg}}}
\newcommand{\Cinftyumg}{C^{\infty}_{\mathrm{umg}}}
\newcommand{\Cfd}{C_{\mathrm{fd}}}
\newcommand{\Cinftyfd}{C^{\infty}_{\mathrm{ufd}}}
\newcommand{\sspace}{\Gamma \backslash G}
\newcommand{\PP}{\mathcal{P}}
\newcommand{\bfP}{\mathbf{P}}
\newcommand{\bfQ}{\mathbf{Q}}
\newcommand{\Siegel}{\mathfrak{S}}
\newcommand{\g}{\mathfrak{g}}
\newcommand{\A}{{\rm A}}
\newcommand{\B}{{\rm B}}
\newcommand{\Q}{\mathbb{Q}}
\newcommand{\Gm}{\mathbb{G}_m}
\newcommand{\kk}{\mathfrak{k}}
\newcommand{\nn}{\mathfrak{n}}
\newcommand{\tF}{\tilde{F}}
\newcommand{\p}{\mathfrak{p}}
\newcommand{\m}{\mathfrak{m}}
\newcommand{\bb}{\mathfrak{b}}
\newcommand{\Ad}{{\rm Ad}\,}
\newcommand{\ttt}{\mathfrak{t}}
\newcommand{\frakt}{\mathfrak{t}}
\newcommand{\U}{\mathcal{U}}
\newcommand{\Z}{\mathbb{Z}}
\newcommand{\bfG}{\mathbf{G}}
\newcommand{\bfT}{\mathbf{T}}
\newcommand{\R}{\mathbb{R}}
\newcommand{\ST}{\mathbb{S}}
\newcommand{\h}{\mathfrak{h}}
\newcommand{\bC}{\mathbb{C}}
\newcommand{\C}{\mathbb{C}}
\newcommand{\E}{\mathbb{E}}
\newcommand{\F}{\mathbb{F}}
\newcommand{\N}{\mathbb{N}}
\newcommand{\qH}{\mathbb {H}}
\newcommand{\temp}{{\rm temp}}
\newcommand{\Hom}{{\rm Hom}}
\newcommand{\ndeg}{{\rm ndeg}}
\newcommand{\Aut}{{\rm Aut}}
\newcommand{\Ext}{{\rm Ext}}
\newcommand{\Nm}{{\rm Nm}}
\newcommand{\End}{{\rm End}\,}
\newcommand{\Ind}{{\rm Ind}\,}
\def\circG{{\,^\circ G}}
\def\M{{\rm M}}
\def\diag{{\rm diag}}
\def\Ad{{\rm Ad}}
\def\G{{\rm G}}
\def\H{{\rm H}}
\def\SL{{\rm SL}}
\def\PSL{{\rm PSL}}
\def\GSp{{\rm GSp}}
\def\PGSp{{\rm PGSp}}
\def\Sp{{\rm Sp}}
\def\St{{\rm St}}
\def\GU{{\rm GU}}
\def\ind{{\rm ind}}
\def\SU{{\rm SU}}

\newcommand{\Sym}{\mathbb{S}}
\def\U{{\rm U}}
\def\Spin{{\rm Spin}}
\def\GL{{\rm GL}}
\def\PGL{{\rm PGL}}
\def\GSO{{\rm GSO}}
\def\Gal{{\rm Gal}}
\def\SO{{\rm SO}}
\def\O{{\rm  O}}
\def\sym{{\rm sym}}
\def\St{{\rm St}}
\def\tr{{\rm tr\,}}
\def\ad{{\rm ad\, }}
\def\Ad{{\rm Ad\, }}
\def\rank{{\rm rank\,}}

\subjclass{Primary 11F70; Secondary 22E55}

\title[Multiplicities for tensor products on Special linear versus Classical groups]
      { Multiplicities for tensor products on Special linear versus Classical groups}

      \begin{abstract}
        In this paper, using computations done
        through the LiE software, we compare tensor product of irreducible selfdual representations of the special linear group with those of classical groups to formulate some conjectures relating the two. In the process a few other phenomenon present themselves which we record as questions.

        More precisely, under the natural correspondence of irreducible finite dimensional
        selfdual representations of $\SL_{2n}(\C)$ with those of $\Spin_{2n+1}(\C)$, it is easy to see
        that if the tensor product of three irreducible representations of $\Spin_{2n+1}(\C)$ contains the trivial representation,
        then so does the tensor product of the corresponding representations of $\SL_{2n}(\C)$. The paper formulates
        a conjecture in the reverse direction. We also deal with the pair $(\SL_{2n+1}(\C), \Sp_{2n}(\C))$. 
      \end{abstract}
\author{Dipendra Prasad and Vinay Wagh}
  \address{Indian Institute of Technology Bombay, Powai, Mumbai-400076} 
\address{Tata Institute of Fundamental
  Research, Colaba, Mumbai-400005.}

  \address{Indian Institute of Technology, Guwahati, Guwahati-781039} 
  \email{prasad.dipendra@gmail.com}
  \email{vinay\_wagh@yahoo.com}

  \maketitle

{\hfill \today}
\setcounter{tocdepth}{1}
\tableofcontents

\section{Introduction}
There is by now a well-known theory relating irreducible, finite dimensional representations of a group such as $\SL_{2n}(\C)$
which are selfdual, equivalently,
invariant under the outer automorphism $g \rightarrow \theta(g) = {}^t\! g^{-1}$, with irreducible, finite dimensional
representations of $\Spin_{2n+1}(\C)$, call this correspondence $\pi^{\SL} \leftrightarrow \pi^{\Spin}$, which has the following character
relationship relating character of the representation $\pi$ of $\SL_{2n}(\C)$ extended to
$\SL_{2n}(\C) \rtimes \langle \theta \rangle$ on the disconnected component with character theory of $\Spin_{2n+1}(\C)$:
$$\Theta(\pi^{\SL})(t \cdot \theta) = \Theta(\pi^{\Spin})(t');$$
here the map $t \rightarrow t'$ is a surjective
homomorphism from, say the diagonal torus $T $ in $\SL_{2n}(\C)$ to the corresponding
diagonal torus $T_\sigma$ in 
$\Spin_{2n+1}(\C)$
with kernel the subgroup of $T$ given by $t/\sigma(t)$, where $\sigma$ is the involution on   $T \subset \SL_{2n}(\C)$ given by
$(t_1,t_2,\cdots, t_{2n}) \rightarrow (t^{-1}_{2n},\cdots, t^{-1}_2,t^{-1}_1)$. We refer to the paper \cite{KLP} for the general context of a group
together with a diagram automorphism where such character relationships can be proved.

The paper was conceived to understand the decomposition of the tensor products of two selfdual representations
$\pi_1^{\SL}$  and  $\pi_2^{\SL}$ of $\SL_{2n}(\C)$  versus corresponding decomposition of the representations $\pi_1^{\Spin}$ and
$\pi_2^{\Spin}$ of    $\Spin_{2n+1}(\C)$. It is easy to see --- and has been observed by others, see for example
\cite{HS}, as well as Theorem 5.6 in \cite{H} --- that for
any representation $$\pi_3^{\Spin} \subset \pi_1^{\Spin} \otimes \pi_2^{\Spin},$$ of  $\Spin_{2n+1}(\C)$, appearing with multiplicity
$m(\pi_3^{\Spin}, \pi_1^{\Spin} \otimes \pi_2^{\Spin}) \not = 0,$
we have:
$$\pi_3^{\SL} \subset \pi_1^{\SL} \otimes \pi_2^{\SL},$$ 
for the representations of  $\SL_{2n}(\C)$,  appearing with multiplicity
$m(\pi_3^{\SL}, \pi_1^{\SL} \otimes \pi_2^{\SL}) \not = 0$.

However, what came as quite a bit of surprise, after much computer assisted checks, that the converse of the above assertion,
i.e., $$m(\pi_3^{\Spin}, \pi_1^{\Spin} \otimes \pi_2^{\Spin}) \not = 0  \iff 
m(\pi_3^{\SL}, \pi_1^{\SL} \otimes \pi_2^{\SL}) \not = 0,$$
holds, or rather, almost holds,
 which egged our curiosity on, and in the process we hope we have found something of some interest.

We end the introduction by mentioning that irreducible components of the tensor product of two irreducible representations of a simple group 
are reasonably well understood through the `Saturation conjecture', a theorem for $\SL_n(\C)$ due to Knutson and Tao \cite{KT}, and due to several
works of P. Belkale, J. Hong, S. Kumar and  L. Shen, among others,
for classical groups, see e.g. \cite{BK}, \cite{HS}, \cite{H}, \cite{Ku2}, \cite{Ku-St}, and
 \cite{Ku} for a general survey. However, the more precise question on what the multiplicities look like has not been attempted as much. Looking at
 tables of tensor product multiplicities, one is dazzled by the wealth of data 
on  statistical behaviour of multiplicities it contains. No precise question has been
attempted as far as we know.
In our work relating tensor product of $\SL_{2n}(\C)$ and $\Spin_{2n+1}(\C)$
(and more generally a pair of groups $G,G_\sigma$ that we will soon come to),
we are concerned (though not this very precise question) with how often are multiplicities in the tensor product for
$\SL_{2n}(\C)$ are even versus odd: a well-known question in the theory of partition functions which is very much present in representation theory
of $\SL_{2n}(\C)$ through Kostant partition function.

\section {Relating multiplicities} \label{bc}

Let $ G$ be a connected reductive algebraic group over $\C$.  Assume that
$\sigma$ is a diagram automorphism of $G$ of order 2.  Thus we assume that
$(B,T, S)$ is a fixed Borel subgroup $B$ in $G$, containing a maximal torus $T$,
and with $S$ a fixed pinning on $B$, which is an identification of each simple
root of $T$ on the Lie algebra of $B$ with the additive group $\C$, and that
$(B,T, S)$ is left invariant under $\sigma$.

Suppose that $\pi$ is an irreducible representation of $G$ which is invariant
under $\sigma$, and thus extends to a natural representation of
$\tilde{G}=G \rtimes \langle \sigma\rangle$ by demanding that the action of
$\sigma$ on the highest $B$-weight of $\pi$ is trivial.  Denote the
representation of $\tilde{G}$ so obtained as $\tilde{\pi}$.  The representation
$\tilde{\pi}$ of $\tilde{G}$ gives rise to an irreducible representation of the group $G_\sigma$ constructed in
\cite{KLP} that we will call $\pi'$, such that:
\[ \Theta(\pi)(t \cdot \sigma) = \Theta(\pi')(t'), \tag{0} \] where $t \in T$ and
$t'$ is the image of $T$ under the natural surjective map from $T$ to $T_\sigma$,  which is a maximal torus in
$ G_\sigma$, and is the maximal quotient of $T$ on which $\sigma$ operates trivially. 

The following proposition is due to \cite{HS}, see also Theorem 5.6 in \cite{H}, and Remark 1.3 in the introduction of \cite{H}.

\begin{prop} \label{mult} Suppose that $\pi_1,\pi_2,\cdots, \pi_n$
  are irreducible representations of $G$ which are all invariant under $\sigma$, giving rise to irreducible representations 
$\tilde{\pi}_1,\tilde{\pi}_2,\cdots, \tilde{\pi}_n$ of $\tilde{G}$, as well as 
representations
$\pi'_1,\pi'_2,\cdots, \pi'_n$ of $G_\sigma$. Let $m  (\pi_1,\pi_2,\cdots, \pi_n)$
be the multiplicity of the trivial
representation of $G$ in $\pi_1 \otimes \pi_2 \otimes \cdots \otimes \pi_n$,
$m  (\tilde{\pi}_1, \tilde{\pi}_2,\cdots, \tilde{\pi}_n)$
be the multiplicity of the trivial
representation of $\tilde{G}  $ in $\tilde{\pi}_1 \otimes \tilde{\pi}_2 \otimes \cdots \otimes \tilde{\pi}_n$,
and 
$m  (\pi'_1,\pi'_2,\cdots, \pi'_n)$
be the multiplicity of the trivial
representation of $G_\sigma$ in $\pi'_1 \otimes \pi'_2 \otimes \cdots \otimes \pi'_n$. Then,
$$2 m  (\tilde{\pi}_1, \tilde{\pi}_2,\cdots, \tilde{\pi}_n) 
= m  (\pi'_1,\pi'_2,\cdots, \pi'_n)
+ m(\pi_1,\pi_2,\cdots, \pi_n).$$
  \end{prop}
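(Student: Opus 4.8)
The plan is to realize each of the three multiplicities as an integral of a product of characters over a compact real form, and then to split the integral over $\tilde G$ according to its two connected components. Fix a $\sigma$-stable maximal compact subgroup $K \subset G$, so that $\tilde K = K \rtimes \langle \sigma \rangle$ is a maximal compact subgroup of $\tilde G$, and let $K_\sigma \subset G_\sigma$ be a maximal compact subgroup. Normalizing all Haar measures to total mass one, the multiplicity of the trivial representation in a tensor product is computed by the standard formula
\[ m(\pi_1,\ldots,\pi_n) = \int_K \prod_{i=1}^n \Theta(\pi_i)(k)\, dk, \]
and analogously for $\tilde G$ (integrating $\prod_i \Theta(\tilde\pi_i)$ over $\tilde K$) and for $G_\sigma$ (integrating $\prod_i \Theta(\pi'_i)$ over $K_\sigma$). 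This reduces the asserted identity to a comparison of three integrals.

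First I would split the integral over $\tilde K$ into its two cosets $K$ and $K\sigma$, each of which carries half the total mass. Since the restriction of $\tilde\pi_i$ to the identity component is $\pi_i$, we have $\Theta(\tilde\pi_i)(k) = \Theta(\pi_i)(k)$ on $K$, and therefore
\[ m(\tilde\pi_1,\ldots,\tilde\pi_n) = \tfrac12 \int_K \prod_i \Theta(\pi_i)(k)\, dk + \tfrac12 \int_K \prod_i \Theta(\tilde\pi_i)(k\sigma)\, dk, \]
where in the second integral the coset $K\sigma$ has been parametrized by $K$ via $k \mapsto k\sigma$. The first term is exactly $\tfrac12\, m(\pi_1,\ldots,\pi_n)$, so it remains to identify the second, twisted integral with $\tfrac12\, m(\pi'_1,\ldots,\pi'_n)$.

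For the twisted integral I would invoke the character relation (0), which gives $\Theta(\tilde\pi_i)(k\sigma) = \Theta(\pi'_i)(k')$, where $k \mapsto k'$ is the surjection $T \to T_\sigma$ extended to $\sigma$-twisted conjugacy classes. The remaining and main point is a \emph{twisted Weyl integration formula}: integration over the component $K\sigma$ with respect to $\sigma$-twisted conjugation is expressed as an integral over $T_\sigma$ weighted by a twisted Weyl denominator, and this must be shown to coincide, under the correspondence $t\sigma \leftrightarrow t'$, with the ordinary Weyl integration formula on $K_\sigma$. Concretely this requires the identification of the twisted Weyl group $W^\sigma$ acting on $T\sigma$ with the Weyl group $W(G_\sigma, T_\sigma)$, together with the matching of the two Jacobians (the twisted Weyl denominator on $T\sigma$ against the Weyl denominator of $G_\sigma$ on $T_\sigma$). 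This compatibility is exactly the structural content of the construction of $G_\sigma$ in \cite{KLP}, where the roots of $G_\sigma$ arise as the $\sigma$-orbits of roots of $G$; once it is in hand, the twisted integral equals $\int_{K_\sigma} \prod_i \Theta(\pi'_i)(h)\, dh = m(\pi'_1,\ldots,\pi'_n)$.

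Combining the two terms gives $m(\tilde\pi_1,\ldots,\tilde\pi_n) = \tfrac12\bigl[m(\pi_1,\ldots,\pi_n) + m(\pi'_1,\ldots,\pi'_n)\bigr]$, which is the claimed identity. The genuinely substantive step is the twisted Weyl integration formula and the matching of Jacobians under $t\sigma \leftrightarrow t'$; everything else is the routine character-integral computation of multiplicities together with the bookkeeping of the two components of $\tilde K$.
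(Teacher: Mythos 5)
Your overall architecture is sound, and it is genuinely different from the paper's: you compute all three multiplicities analytically, as character integrals over compact forms, and split the $\tilde K$-integral over its two components, whereas the paper never integrates at all. Your bookkeeping of the two components and the identity $2m(\tilde\pi_\bullet)=m(\pi_\bullet)+m(\pi'_\bullet)$ at the end is correct \emph{provided} the twisted integral over $K\sigma$ really equals $m(\pi'_1,\ldots,\pi'_n)$. But that is exactly where there is a genuine gap. You defer the "substantive step" --- the twisted Weyl integration formula together with the matching of twisted Weyl groups, Jacobians and measures under $t\sigma\mapsto t'$ --- to \cite{KLP}, and it is not there: \cite{KLP} supplies only the character identity (0) on the torus, i.e.\ the equality of two \emph{functions}, not any statement about measures or integration. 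The matching you need is not a formality. For instance, for $G=\SL_3$ with $t=(s,1,s^{-1})\in T^{\sigma}$ one computes
\[
\det\bigl(1-\mathrm{Ad}(t\sigma)\bigr)\big|_{\mathfrak{g}/\mathfrak{t}^{\sigma}}
=2\,(1-s^{4})(1-s^{-4}),
\]
the factor $2$ coming from $\mathfrak{t}^{-\sigma}$ and the factor $(1\pm s^{\pm2})$ pattern coming from the length-two root orbit $\{\alpha_1,\alpha_2\}$ with $\alpha_1+\alpha_2$ again a root (the type $A_{2n}$ phenomenon), while the Weyl denominator of $G_\sigma=\SL_2$ at $t'$ is $(1-s^{4})(1-s^{-4})$; moreover the natural map $T^{\sigma,0}\to T_\sigma$ is a degree-two isogeny. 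So constants of this kind must be shown to cancel against the measure normalizations and the identification $W^{\sigma}\cong W(G_\sigma,T_\sigma)$, case by case over the $\sigma$-orbits of roots. This is provable (it is in effect what the twisted-endoscopy literature calls matching of norms of twisted classes), but it is the entire content of your proof, and asserting it by reference to \cite{KLP} leaves the argument incomplete.

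The contrast with the paper's proof is instructive: the paper shows that no integration formula is needed. It decomposes $\tilde\pi_1\otimes\cdots\otimes\tilde\pi_n$ over $\tilde G$, evaluates both sides of the resulting character identity at $t\cdot\sigma$ using (0) purely as an algebraic identity of functions on $T_\sigma$, and then invokes linear independence of the irreducible characters of $G_\sigma$ to read off the coefficient of the trivial character: this gives $m(\pi'_1,\ldots,\pi'_n)={\rm tr}\bigl[\sigma\circlearrowleft M(\mathbf{1})\bigr]=m-n$, where $m$ and $n$ are the multiplicities of the trivial and sign characters of $\tilde G$, while $m+n=m(\pi_1,\ldots,\pi_n)$. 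If you want to salvage your analytic route, either prove the Jacobian/measure matching by the root-folding computation sketched above, or cite a source that actually contains it (it is not \cite{KLP}); alternatively, note that your splitting of $\int_{\tilde K}$ already gives $\int_{K\sigma}\prod_i\Theta(\tilde\pi_i)(k\sigma)\,dk=m-n$ by orthogonality for the disconnected group $\tilde K$, after which the paper's algebraic use of (0) plus linear independence is still required to identify $m-n$ with $m(\pi'_1,\ldots,\pi'_n)$ --- at which point you have reproduced the paper's proof and the integration adds nothing.
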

\begin{proof}
  Decompose the tensor product of the representations: \[\tilde{\pi}_1 \otimes \tilde{\pi}_2 \otimes \cdots \otimes \tilde{\pi}_n = \sum m(\pi) \pi, \tag{1}\]
  where $\pi$ runs over all irreducible representations of $G$. Since each of the representations $\pi_i$ is invariant
  under $\sigma$, so is their tensor product, thus we can write the equation (1) as:
  \[\tilde{\pi}_1 \otimes \tilde{\pi}_2 \otimes \cdots \otimes \tilde{\pi}_n
  = \sum_{\tilde{\pi}|_G {\rm ~ irreducible} }
  m(\tilde{\pi}) \tilde{\pi} +\sum_{\tilde{\pi}|_G {\rm ~not ~ irreducible} } m(\tilde{\pi}) \tilde{\pi}
  . \tag{2}\]
    In equation (2), both sides are representations of $\tilde{G}=G \rtimes \langle \sigma\rangle$.
  In the first sum on the
  right side of the equality,  $\sum m(\tilde{\pi}) \tilde{\pi}$ can be replaced by $\sum M(\pi) \tilde{\pi}$ where $M(\pi)$ is the $\pi$-isotypique piece
  in the tensor product of dimension $m(\pi)$ which carries the action of $\sigma$. (If 
  $\tilde{\pi}|_G$ is  irreducible, then there are two distinct irreducible representations of $\tilde{G}$:  $\tilde{\pi}$
 and $ \tilde{\pi}\otimes \epsilon$ where $\epsilon$ is the sign character of $\tilde{G}$, which have the same restriction to $G$.) 
  Calculating the character of the representations appearing on the left and on the right hand side of equation (2) at elements of the
  form $t \cdot \sigma \in T\rtimes \langle \sigma\rangle \subset   G \rtimes \langle \sigma\rangle$, using equation (0) with
  $t'=t/\sigma(t) \in T_\sigma$, we find that:
  \[ \Theta(\pi'_1 \otimes \pi_2' \otimes \cdots \pi_n')(t') = \sum_{\pi'} {\rm tr}[\sigma \circlearrowleft M(\pi) ]
  \Theta(\pi')(t'), \tag{3}\]
where ${\rm tr}[\sigma \circlearrowleft M(\pi) ]$ denotes trace of the action of $\sigma$ on $M(\pi)$.
  If in the tensor product $\tilde{\pi}_1 \otimes \tilde{\pi}_2 \otimes \cdots \otimes \tilde{\pi}_n$ of $\tilde{G}=G \rtimes \langle \sigma\rangle$,
  we have $m$ copies of the trivial representation of $\tilde{G}$, and $n$ copies of the sign character of $\tilde{G}$, it follows
  from equation (3) and linear independence of irreducible characters of $G_\sigma$ that:
  $$m  (\pi'_1,\pi'_2,\cdots, \pi'_n) = m-n = 2m-(m+n), $$
  with
  \begin{eqnarray*} m & = & m(\tilde{\pi}_1, \tilde{\pi}_2,\cdots, \tilde{\pi}_n) \\
    (m+n) & =  & m  (\pi_1,\pi_2,\cdots, \pi_n), \end{eqnarray*}
  proving the  proposition.
\end{proof}

\begin{cor} \label{mult-cor} With the notation as above,
  $$m-n = m({\pi'_1}, {\pi'_2},\cdots, \pi_n')
  \leq m({\pi_1}, {\pi_2}, \cdots, \pi_n) = m+n,$$
  and $$ m({\pi'_1}, {\pi'_2},\cdots, \pi_n') \equiv m({\pi_1}, {\pi_2}, \cdots, \pi_n)
  \bmod 2.$$
  In particular, if  $m({\pi_1}, {\pi_2}, \cdots, \pi_n) \leq 1$
  then $m({\pi'_1}, {\pi'_2},\cdots, \pi_n') \leq 1$, and
$$ m({\pi'_1}, {\pi'_2},\cdots, \pi_n') = m({\pi_1}, {\pi_2}, \cdots, \pi_n).$$
\end{cor}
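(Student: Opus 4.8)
The plan is to read everything off directly from the proof of Proposition~\ref{mult}, where the tensor product $\tilde{\pi}_1 \otimes \cdots \otimes \tilde{\pi}_n$ of representations of $\tilde{G} = G \rtimes \langle \sigma \rangle$ was shown to contain exactly $m$ copies of the trivial character and $n$ copies of the sign character $\epsilon$, with $m = m(\tilde{\pi}_1, \ldots, \tilde{\pi}_n)$, and with the identities
$$m(\pi'_1, \ldots, \pi'_n) = m - n, \qquad m(\pi_1, \ldots, \pi_n) = m + n.$$
First I would record the two facts that make the corollary work: $m$ and $n$ are non-negative integers, being multiplicities of the trivial and sign characters of $\tilde{G}$; and $m - n = m(\pi'_1, \ldots, \pi'_n)$ is itself a multiplicity, hence also non-negative, which forces $n \le m$.

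Granting this, the first line of the corollary is immediate: from $n \ge 0$ we get $m - n \le m + n$, that is $m(\pi'_1, \ldots, \pi'_n) \le m(\pi_1, \ldots, \pi_n)$, while the two equalities are exactly the displayed identities above. For the congruence I would simply observe that
$$m(\pi_1, \ldots, \pi_n) - m(\pi'_1, \ldots, \pi'_n) = (m + n) - (m - n) = 2n \equiv 0 \pmod 2,$$
equivalently that the sum $m(\pi'_1,\ldots,\pi'_n) + m(\pi_1,\ldots,\pi_n) = 2m$ supplied by Proposition~\ref{mult} is even.

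For the final ``in particular'' assertion I would run a short squeeze. Suppose $m(\pi_1, \ldots, \pi_n) = m + n \le 1$. Combining the three bounds already in hand, namely $0 \le m - n$, then $m - n \le m + n$, and $m + n \le 1$, gives $m(\pi'_1, \ldots, \pi'_n) = m - n \in \{0, 1\}$, so in particular $m(\pi'_1, \ldots, \pi'_n) \le 1$. Since $m - n$ and $m + n$ have the same parity and both lie in $\{0,1\}$, they must coincide, which yields $m(\pi'_1, \ldots, \pi'_n) = m(\pi_1, \ldots, \pi_n)$.

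I do not expect any genuine obstacle here: the statement is a bookkeeping consequence of Proposition~\ref{mult}. The only point that requires a moment's care --- and the one place where a careless argument could slip --- is the sign bookkeeping: one must use both that the auxiliary count $n$ is non-negative \emph{and} that $m - n$ is non-negative, the latter because it equals an actual multiplicity. It is precisely the pair of inequalities $0 \le m - n \le m + n$, together with the parity identity, that upgrades the rank-one bound into the claimed equality.
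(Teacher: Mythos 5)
Your proposal is correct and is exactly the argument the paper intends: the corollary is stated without proof as an immediate consequence of Proposition~\ref{mult}, whose proof supplies the identities $m(\pi'_1,\ldots,\pi'_n)=m-n$ and $m(\pi_1,\ldots,\pi_n)=m+n$, from which your non-negativity, parity, and squeeze observations read off the three claims. The one point you rightly flag --- that both $n\geq 0$ and $m-n\geq 0$ are needed, the latter because $m-n$ is itself a multiplicity --- is precisely the bookkeeping the paper leaves implicit.
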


\begin{remark}
  The Proposition \ref{mult} above has essentially  the same formulation as Proposition 2.1 in \cite{Pr} although
  Proposition 2.1 in \cite{Pr} appears to be more general in that it applies to a general pair $(G,H)$ of algebraic groups
  over finite fields, whereas in  our Proposition \ref{mult}, we have shied away from formulating it for general pairs, such as
  $(\GL_{n+1},\GL_n)$ since no diagram automorphism preserves such pairs, and have contented ourselves to have a formulation only for $(G\times \cdots \times G, \Delta G)$.
\end{remark}

\section{Review of the group $G_\sigma$}

In this section we follow \cite{KLP} to discuss the relationship between representations of the groups $G$ and $G_\sigma$ which requires first a review of the group $G_\sigma$.

Fix a Borel subgroup $B$ of $G$ and a maximal torus $T$ contained in $B$.
 Let $X$ be the character group
of $T$, $Y$ the co-character group of $T$. Let $R\subset X$ be the set of roots of $T$ in $G$, $R_+$ the set of positive roots defined by $B$ and $\Pi= \{\alpha_i: i \in I\}$  the set of simple roots in $R_+$. For each
$i\in I$, let $\check{\alpha}_i \in Y$ be the corresponding co-root. Thus
$(Y,X, \check{\alpha}_i, \alpha_i,  i \in I)$ is the root datum for $G$.  
  For $i \in I,$ let $x_i: \C \rightarrow G$, $y_i: \C \rightarrow G$ be a pinning on $G$ corresponding to root spaces $\alpha_i, -\alpha_i$.
  
  Now we assume that the automorphism $\sigma$ of $G$ preserves $B,T$, and there is a permutation $i \rightarrow
  \sigma(i)$ on the index set $I$ such that $\sigma(x_i(a)) = x_{\sigma(i)}(a),$ and   $\sigma(y_i(a)) = y_{\sigma(i)}(a)$ for $a \in \C$.

  Set, $$Y_\sigma = Y/(1-\sigma)Y, \,\,\, {}^\sigma X= \{\lambda \in X| \sigma(\lambda)=\lambda \}.$$

  We note that the natural perfect pairing $\langle -,- \rangle : Y \times X \rightarrow \Z$ induces a perfect
  pairing $\langle -,- \rangle: Y_\sigma \times {}^\sigma X \rightarrow \Z$; more precisely, if
  $j: Y \rightarrow Y_\sigma$, and $\iota: {}^\sigma X \rightarrow X$, then,
  \[ \langle j(a), b\rangle = \langle a, \iota(b) \rangle, \tag{1}\]
  for all $a \in Y, b \in {}^\sigma X.$

  Let $I_\sigma$ be the set of
  $\sigma$-orbits on $I$. For any $\eta \in I_\sigma$, let $\check{\alpha}_\eta$ be the image of $\check{\alpha}_i$ under $Y \rightarrow Y_\sigma$ where $i$ is any element of $\eta$. For any $\eta \in I_\sigma$, let
  $\alpha_\eta = 2^h\sum_{i \in \eta}  \alpha_i\in {}^ \sigma X$ where $h$ is the number of unordered pairs
  $i,j \in \eta $ such that $\alpha_i+\alpha_j \in R$. We have $h =0$ unless $G$ is of type $A_{2n}$, when $h=0$ for all but one, and $h=1$ for one simple root.

  The following proposition is proved in \cite{KLP}.

  \begin{prop}
    $(Y_\sigma, {}^\sigma X, \check{\alpha}_\eta, \alpha_\eta, \eta \in I_\sigma)$ is a root datum, defining the group $G_\sigma$ which is simply connected if $G$ is.
    \end{prop}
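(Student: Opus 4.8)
The plan is to verify directly that the quadruple $(Y_\sigma, {}^\sigma X, \{\check\alpha_\eta\}_{\eta\in I_\sigma}, \{\alpha_\eta\}_{\eta\in I_\sigma})$ satisfies the axioms of a based root datum, and then to read off simple-connectedness from a spanning property of the coroots. The duality axiom is already in hand: equation (1) of this section exhibits $\langle-,-\rangle\colon Y_\sigma\times{}^\sigma X\to\Z$ as a perfect pairing of finite free $\Z$-modules, and each $\alpha_\eta$ lies in ${}^\sigma X$ since it is a sum over a full $\sigma$-orbit. What remains is (i) to show that the Cartan integers $c_{\eta\zeta}=\langle\check\alpha_\eta,\alpha_\zeta\rangle$ form a Cartan matrix of finite type, and (ii) that $\{\alpha_\eta\}$ and $\{\check\alpha_\eta\}$ are linearly independent; the full sets of roots and coroots are then generated by the associated Weyl group, and the reflection-stability axioms hold automatically once the Cartan matrix is of finite type.

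First I would compute the Cartan integers. Using the adjunction of equation (1), for any $i\in\eta$ one has $c_{\eta\zeta}=\langle\check\alpha_i,\,2^{h_\zeta}\sum_{k\in\zeta}\alpha_k\rangle$, and this is independent of the chosen $i\in\eta$ because $\alpha_\zeta$ is $\sigma$-fixed while $\sigma$ preserves the pairing and sends $\check\alpha_i$ to $\check\alpha_{\sigma(i)}$. Integrality is automatic from the $\Z$-valued pairing, and for $\eta\neq\zeta$ every term $\langle\check\alpha_i,\alpha_k\rangle$ with $k\in\zeta$ is a non-positive integer, so $c_{\eta\zeta}\le 0$. The diagonal entries require exactly the three cases the definition of $\alpha_\eta$ is built for: for a singleton orbit $h=0$ and $c_{\eta\eta}=\langle\check\alpha_i,\alpha_i\rangle=2$; for a two-element orbit $\{i,j\}$ with $\alpha_i+\alpha_j\notin R$ one has $h=0$ and $c_{\eta\eta}=\langle\check\alpha_i,\alpha_i+\alpha_j\rangle=2+0=2$ since the roots are orthogonal; and for the unique orbit in type $A_{2n}$ with $\alpha_i+\alpha_j\in R$ one has $h=1$ and $c_{\eta\eta}=\langle\check\alpha_i,\,2(\alpha_i+\alpha_j)\rangle=2(2-1)=2$. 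This last line is precisely why the factor $2^h$ is inserted: without it the diagonal entry would be $1$, and the naive fold of $A_{2n}$ would yield the non-reduced system $BC_n$ rather than a genuine root datum.

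For finite type and independence I would argue as follows. The vectors $\beta_\eta=\sum_{i\in\eta}\alpha_i$ have pairwise disjoint support in the basis $\{\alpha_i\}$ of $X\otimes\R$, hence are linearly independent, and scaling by $2^{h}$ preserves this, so $\{\alpha_\eta\}$ is linearly independent in ${}^\sigma X$. Finite type can then be obtained either by the classification of order-two diagram automorphisms, checking case by case that the folded matrix is of finite type — for instance $A_{2n-1}\to B_n$, giving $G_\sigma=\Spin_{2n+1}$, and $A_{2n}\to C_n$, giving $G_\sigma=\Sp_{2n}$, together with $D_{n}\to B_{n-1}$ and $E_6\to F_4$ — or uniformly, by transporting a $\sigma$-invariant, Weyl-invariant positive-definite form from $X\otimes\R$ to its $\sigma$-fixed subspace and checking that the symmetrized Cartan matrix is positive definite. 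Once $(c_{\eta\zeta})$ is known to be nondegenerate, linear independence of the coroots in $Y_\sigma$ follows by pairing any relation $\sum_\eta m_\eta\check\alpha_\eta=0$ against each $\alpha_\zeta$.

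Finally, simple-connectedness is short. Recall that $G$ is simply connected exactly when its coroots $\check\alpha_i$ span the cocharacter lattice $Y$. The projection $j\colon Y\to Y_\sigma$ is surjective and satisfies $j(\check\alpha_i)=\check\alpha_{\eta}$ for the orbit $\eta$ of $i$, since $\check\alpha_i$ and $\check\alpha_{\sigma(i)}$ have equal image in $Y/(1-\sigma)Y$; hence the $\check\alpha_\eta$ span $Y_\sigma$, which is the criterion for $G_\sigma$ to be simply connected. The main obstacle is step (i): the genuinely delicate point is the interaction of the $2^h$ normalization with the requirement $c_{\eta\eta}=2$ and with reflection-stability, which is subtle only in type $A_{2n}$, where naive folding fails to produce a reduced root datum and the factor $2^h$ is exactly the needed correction.
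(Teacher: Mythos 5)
The paper offers no proof of this proposition to compare against: it is quoted from \cite{KLP} with the sentence ``The following proposition is proved in \cite{KLP}'', so your argument has to stand on its own. In outline it does, and it is the natural direct verification: perfect pairing from equation (1) of that section, well-definedness and integrality of the Cartan integers $c_{\eta\zeta}$, the three-case check that $c_{\eta\eta}=2$ (which is exactly the job the factor $2^h$ is doing in type $A_{2n}$, and your observation that omitting it produces the non-reduced system $BC_n$ is the right way to see it), non-positivity off the diagonal, finite type, and finally simple-connectedness from $j(\check\alpha_i)=\check\alpha_\eta$, so that the simple coroots of $G_\sigma$ span $Y_\sigma$ whenever those of $G$ span $Y$. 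That last reduction is clean and correct.

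Two points need repair or honesty. First, your list of folded types contains an error that matters conceptually: under the construction used here (roots are orbit-sums in ${}^\sigma X$, coroots are images in the coinvariants $Y_\sigma$), the order-two automorphism of $D_n$ produces type $C_{n-1}$, not $B_{n-1}$; indeed the orbit root $\alpha_{n-1}+\alpha_n$ pairs to $-2$ against $\check\alpha_{n-2}$ while $\check\alpha_{n-1}$ pairs to $-1$ against $\alpha_{n-2}$, so the orbit root is long. Type $B_{n-1}$ is what the classical fixed-point folding $\SO_{2n-1}\subset \SO_{2n}$ gives; the construction here is its dual, which is also precisely why $A_{2n-1}$ folds to $B_n$ (giving $\Spin_{2n+1}$) rather than to $C_n$ in this paper. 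The slip is harmless for your finite-type conclusion, since $C_{n-1}$ is also of finite type, but in a statement whose whole content is the dual folding it should be stated correctly. Second, two steps are cited rather than proved: (a) that a pair of lattices in perfect duality together with candidate simple roots and coroots whose Cartan matrix is of finite type generates a genuine root datum (reflection stability, with the root and coroot sets defined as Weyl orbits) --- this is standard but it is where the remaining content lives; and (b) in your uniform positive-definiteness argument, the identity $\langle \check\alpha_\eta,\lambda\rangle = 2(\alpha_\eta,\lambda)/(\alpha_\eta,\alpha_\eta)$ for all $\lambda$ in the $\sigma$-fixed subspace, without which ``the symmetrized Cartan matrix'' is not yet meaningful. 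This identity is true and easy --- check it orbit by orbit, using $(\alpha_i,\alpha_j)=0$ in the non-adjacent two-element case and $(\alpha_i,\alpha_j)=-\tfrac{1}{2}(\alpha_i,\alpha_i)$ in the $A_{2n}$ case --- but it must be said, as it is the precise sense in which $\check\alpha_\eta$ is the metric coroot of $\alpha_\eta$.
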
 

  All the above preliminaries recalled from \cite{KLP} were done for the purpose of the following proposition which is of basic importance when we make the explicit computations using LiE software.
  
  \begin{prop} \label{Gsigma}
    For $i \in I$, let $\varpi_i$ be the fundamental dominant weight for $G$ (defined by the property
    $\langle \check{\alpha}_j, \varpi_i \rangle = \delta_{i,j}$). Similarly, for $\eta \in I_\sigma$,
    let $\varpi_\eta$ be the fundamental dominant weight for $G_\sigma$ (defined by the property
    $\langle \check{\alpha}_\eta, \varpi_{\eta'} \rangle =
    \delta_{\eta,\eta'}$). Then, under the inclusion
    of ${}^\sigma X \subset X$, we have:
    $$\varpi_\eta = \sum_{i \in \eta} \varpi_i.$$
  \end{prop}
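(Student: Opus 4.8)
The plan is to verify that the element $\sum_{i \in \eta}\varpi_i$ of $X$ has the two properties that characterize $\varpi_\eta$: it lies in ${}^\sigma X$, and it pairs with the simple coroots $\check{\alpha}_{\eta'}$ of $G_\sigma$ to give $\delta_{\eta,\eta'}$. Uniqueness of such an element then forces the claimed equality. Throughout I use the compatibility (1) of the two perfect pairings and the fact, from the preceding proposition, that $G_\sigma$ is simply connected when $G$ is — the case in which the $\varpi_i$ genuinely lie in $X$.

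First I would record how $\sigma$ acts on the fundamental weights. Since $\sigma$ permutes the simple coroots by $\sigma(\check{\alpha}_i) = \check{\alpha}_{\sigma(i)}$ and preserves the pairing, the dual action on $X$ satisfies $\sigma(\varpi_i) = \varpi_{\sigma(i)}$; indeed $\langle \check{\alpha}_j, \sigma(\varpi_i)\rangle = \langle \check{\alpha}_{\sigma^{-1}(j)}, \varpi_i\rangle = \delta_{\sigma^{-1}(j),i} = \delta_{j,\sigma(i)}$. Because $\eta$ is a single $\sigma$-orbit in $I$, the sum $\sum_{i\in\eta}\varpi_i$ is merely permuted by $\sigma$, hence is $\sigma$-invariant and so lies in ${}^\sigma X$. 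This shows the right-hand side is a legitimate element of ${}^\sigma X \subset X$.

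Next I would pair $b = \sum_{i\in\eta}\varpi_i$, viewed in ${}^\sigma X$, against each simple coroot $\check{\alpha}_{\eta'}$ of $G_\sigma$. Choosing any representative $i' \in \eta'$ so that $\check{\alpha}_{\eta'} = j(\check{\alpha}_{i'})$, and letting $\iota(b)$ denote the image of $b$ in $X$, the compatibility (1) gives
\[ \langle \check{\alpha}_{\eta'}, b\rangle = \langle j(\check{\alpha}_{i'}), b\rangle = \langle \check{\alpha}_{i'}, \iota(b)\rangle = \sum_{i\in\eta}\langle \check{\alpha}_{i'},\varpi_i\rangle = \sum_{i\in\eta}\delta_{i',i} = \delta_{\eta,\eta'}, \]
the last step holding because the representative $i'\in\eta'$ lies in $\eta$ exactly when $\eta = \eta'$. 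The answer is independent of the chosen $i'$, as it must be since $\check{\alpha}_{\eta'}$ is well defined in $Y_\sigma$. Note that the factor $2^h$ occurring in the definition of the roots $\alpha_\eta$ never enters, since a fundamental weight is detected only through its pairing with coroots.

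Finally I would invoke uniqueness. As $G_\sigma$ is simply connected, its simple coroots $\{\check{\alpha}_\eta : \eta \in I_\sigma\}$ form a $\Z$-basis of $Y_\sigma$; combined with the perfectness of $\langle -,-\rangle : Y_\sigma \times {}^\sigma X \to \Z$, an element of ${}^\sigma X$ is then determined by its pairings against all the $\check{\alpha}_{\eta'}$. Since $\varpi_\eta$ is by definition the element with $\langle \check{\alpha}_{\eta'}, \varpi_\eta\rangle = \delta_{\eta,\eta'}$, and $\sum_{i\in\eta}\varpi_i$ has just been shown to enjoy exactly the same pairings, the two coincide. The only point requiring genuine care is the bookkeeping between the two pairings through (1), together with the reduction to the simply connected situation in which the coroots form a basis; once that is in place the computation above is immediate.
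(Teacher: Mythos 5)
Your proof is correct and takes essentially the same approach as the paper: both arguments rest entirely on the compatibility (1) between the two perfect pairings, the paper using it to compute $\langle \check{\alpha}_i, \varpi_\eta \rangle$ (equal to $1$ if $i \in \eta$ and $0$ otherwise) and reading off the identity from duality for $G$, while you run the same computation in the reverse direction, checking that $\sum_{i \in \eta} \varpi_i$ lies in ${}^\sigma X$ and has the defining pairings with the $\check{\alpha}_{\eta'}$, then invoking uniqueness via duality for $G_\sigma$. The auxiliary facts you supply ($\sigma$-invariance of the sum, the $\check{\alpha}_\eta$ forming a basis of $Y_\sigma$ in the simply connected case) are precisely the duals of what the paper's one-line argument implicitly uses, so the mathematical content is the same.
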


  \begin{proof}
    It suffices to prove that
\[\langle \check{\alpha}_i, \varpi_{\eta} \rangle =  \begin{cases}
  1, & \text{if $ i \in \eta$} \\
  0, & \text{ otherwise},
\end{cases} \]
which is just equation (1) above together with the fact that
    $\langle \check{\alpha}_i, \varpi_{\eta} \rangle \geq 0$, and belongs to $\Z$.
    \end{proof}
\section{An example of twisted character}
A consequence of the Proposition \ref{Gsigma} for the pair
$(G, G_\sigma)=(\SL_{2n}(\C), \Spin_{2n+1}(\C))$,
is that the fundamental representation
$\Lambda^i(\C^{2n+1}), i \leq n-1$
of $\Spin_{2n+1}(\C))$ goes to the selfdual representation of $\SL_{2n}(\C)$
of highest weight $\varpi_i + \varpi_{2n+1-i}$ where $\varpi_i$ is the highest weight of the irreducible representation 
$\Lambda^i(\C^{2n})$ of $\SL_{2n}(\C)$, and the spin representation (a fundamental representation of $\Spin_{2n+1}(\C)$ of dimension $2^n$) goes to the representation $\Lambda^n(\C^{2n})$ of $\SL_{2n}(\C)$. We discuss this last case as an example  relating an irreducible
representation of a group $G$ invariant under
a diagram automorphism $\sigma$ with   an irreducible representation of $G_\sigma$ through the twisted character identity.

Thus our example will be for the group $G=\SL_{2n}(\C)$ realized on its
natural representation on a vector space $V$ of dimension $2n$, $\sigma$ will be the involution,
$$g\rightarrow \sigma(g)=J {\,}^t\!g^{-1} J^{-1},$$
where $J$ is the $(2n) \times (2n)$  anti-diagonal matrix (with respect to a basis of $V$ that we will denote as  $\{e_1,\cdots, e_n, e_{n+1},\cdots, e_{2n} \}$):
$$\left  ( \begin{array}{cccccccc}
{}& {} & {} & {}  &  {}   & {}  &{} & 1  \\
{}&   & {} & {}   &  {}  &  {}  & -1 &   \\
{}& {}  &   & {}  &   {} & 1   & {}   &\\
{}& {}  &   &   &   {} & {}   &{} &  \\
{}& {}  &   & {}     \cdot&  & {}   &{}   &\\
{} &   & {\cdot}     &  &  &  &    & \\
{}& 1  &   & {}  &   {} &  &       &    \\
-1 & {}   &  {}  & {}  &  & & &   
\end{array} \right),
$$

We will take the representation of $\SL_{2n}(\C)$ to be
$\Lambda^n(V) = \Lambda^n(\C^{2n})$
which we know is selfdual, thus invariant under $\sigma$. We will prove the following.

\begin{prop}
  The twisted character of the representation $\Pi=\Lambda^n(V)
  =\Lambda^n(\C^{2n})$ of $\SL_{2n}(\C)$
  is the character of the spin representation of $\Spin_{2n+1}(\C)$ of dimension
  $2^n$.
\end{prop}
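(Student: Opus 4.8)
The plan is to compute the twisted character $\Theta(\Pi)(t\cdot\sigma)$ directly on the diagonal torus and match it, via the character identity $(0)$, with the known Weyl-character formula for the spin representation of $\Spin_{2n+1}(\C)$. The representation $\Pi=\Lambda^n(V)$ has a basis indexed by $n$-element subsets $S\subset\{1,\dots,2n\}$, namely $e_S=e_{i_1}\wedge\cdots\wedge e_{i_n}$, and $\sigma$ acts on $\SL_{2n}(\C)$ as $g\mapsto J\,{}^t\!g^{-1}J^{-1}$. First I would fix a diagonal $t=\diag(t_1,\dots,t_{2n})$ and understand the action of $t\cdot\sigma$ on $\Lambda^n(V)$: since $\sigma(t)$ is again diagonal (the involution sends $t_i$ to $t_{2n+1-i}^{-1}$, up to the signs coming from $J$), the operator $\sigma$ permutes the basis vectors $e_i\mapsto \pm e_{2n+1-i}$, and hence permutes the $e_S$ up to sign according to $S\mapsto S'=\{2n+1-i:i\in S\}$. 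The twisted character is then a sum over those $S$ with $S\cdot\sigma$ proportional to $e_S$, weighted by the eigenvalue, i.e. a \emph{fixed-point} computation for the induced permutation on $n$-subsets.

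Next I would identify precisely which $n$-subsets $S$ contribute. The pairing $i\leftrightarrow 2n+1-i$ partitions $\{1,\dots,2n\}$ into $n$ pairs, and the permutation $S\mapsto S'$ fixes (up to sign) exactly those $S$ that are unions of complete pairs together with exactly one element from each remaining pair --- more carefully, the surviving terms come from subsets $S$ meeting each pair $\{i,2n+1-i\}$ in exactly one element, since a pair fully contained in or fully disjoint from $S$ gets swapped and cancels against its companion in the trace. For each such $S$ one records a sign (from the entries $\pm1$ of $J$ and from the reordering of the wedge) and a monomial in the $t_i$. Collecting these contributions, the twisted character factors as a product over the $n$ pairs, each pair contributing a binomial $\bigl(u_j + u_j^{-1}\bigr)$ or $\bigl(u_j^{1/2}+u_j^{-1/2}\bigr)$-type factor in the coordinates $u_j$ of the torus $T_\sigma\subset\Spin_{2n+1}(\C)$. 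This is exactly the shape of the spin character $\prod_{j=1}^n\bigl(u_j^{1/2}+u_j^{-1/2}\bigr)$, which is the Weyl character of the $2^n$-dimensional spin representation of $\Spin_{2n+1}(\C)$.

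To make the matching rigorous I would invoke Proposition \ref{Gsigma} together with the torus map $t\mapsto t'$, $t'=t/\sigma(t)$, described in the introduction: under this map the coordinates of $T_\sigma$ are $u_j = t_j/t_{2n+1-j}^{-1}=t_j t_{2n+1-j}$ (suitably normalized), so the product I obtain from the fixed-point count must be re-expressed in the $u_j$ before comparing. By Proposition \ref{Gsigma} the highest weight $\varpi_n$ of $\Lambda^n(V)$ corresponds to $\sum_{i\in\eta}\varpi_i$ for the relevant orbit, which is the fundamental spin weight of $\Spin_{2n+1}(\C)$; this pins down \emph{which} representation of $\Spin_{2n+1}(\C)$ we should land on, so that once the two characters agree as Laurent polynomials in the $u_j$ the identification is forced.

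The main obstacle I expect is the bookkeeping of signs. The antidiagonal matrix $J$ carries alternating $\pm1$ entries, and $\sigma$ therefore acts on $e_i$ with a sign $\varepsilon_i$; when this is extended to $\Lambda^n(V)$ one picks up both these entrywise signs and a permutation sign from reordering $e_{2n+1-i_1}\wedge\cdots\wedge e_{2n+1-i_n}$ back into increasing order. Verifying that these signs conspire so that each surviving pair contributes with the \emph{correct} sign --- in particular that nothing introduces an extra global sign or a discrepancy with the normalization built into the extension $\tilde\pi$ (where $\sigma$ is required to act trivially on the highest weight vector) --- is the delicate point, and it is where I would spend the most care, checking the low-rank case $n=1$ (where $\Lambda^1(\C^2)$ for $\SL_2$ should match the $2$-dimensional spin representation of $\Spin_3=\SL_2$) as a sanity check before treating general $n$.
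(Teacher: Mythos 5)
There is a genuine gap, and it sits exactly where the real content of the proof lies. You take the operator implementing $\sigma$ on $\Pi=\Lambda^n(\C^{2n})$ to be induced by the signed permutation $e_i\mapsto \pm e_{2n+1-i}$ of $V$, so that it permutes the lines $\C e_S$ by $S\mapsto S'=\{2n+1-i: i\in S\}$. That is not the right operator: $\sigma(g)=J\,{}^t\!g^{-1}J^{-1}$ involves inverse--transpose, hence is not inner, and the intertwiner $\psi$ with $\psi(gv)=\sigma(g)\psi(v)$ is forced to send $\C e_S$ to $\C e_{(S')^c}$ (complement of $S'$): since the weights of $\Lambda^n(\C^{2n})$ are multiplicity free, $\psi$ permutes the weight lines, and a vector of weight $\chi$ must go to the $\chi^\sigma$-line, which for $\chi=\sum_{i\in S}e_i$ is, modulo the relation $e_1+\cdots+e_{2n}=0$, the line $\C e_{(S')^c}$. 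Concretely, $\psi$ is the duality map $\Lambda^n(V)\cong\Lambda^n(V^*)\cong\Lambda^n(V)$, not a signed permutation of $V$ extended to wedges. This matters: the fixed sets of your map $S\mapsto S'$ are the unions of complete pairs (so for $n$ odd there would be none at all and the twisted trace would come out $0$), whereas the fixed sets of the true map $S\mapsto (S')^c$ are precisely the $2^n$ transversals meeting each pair $\{i,2n+1-i\}$ in one element. Your later claim that the transversals are the surviving terms is the correct statement, but it contradicts the permutation you wrote down and cannot be derived from it; as written, the two halves of your fixed-point analysis are inconsistent.

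Moreover, the sign bookkeeping you defer to the end is not a routine verification --- it is the crux, and the paper's proof is structured so that it never arises. The paper normalizes $\psi$ by $\psi(v_0)=v_0$ on the highest weight vector $v_0=e_1\wedge\cdots\wedge e_n$, observes that every fixed weight line is of the form $w\cdot v_0$ for $w$ in the subgroup $(\Z/2)^n\subset \Sym_{2n}$ of Weyl elements satisfying $\sigma(w)=w$, and then applies equivariance once: $\psi(w\cdot v_0)=\sigma(w)\psi(v_0)=w\cdot v_0$. Hence $\psi$ acts by exactly $+1$ on each of the $2^n$ fixed lines, the twisted character is $\sum_{I\subset E}\chi_I\cdot\eta_{E-I}$, i.e. the spin character, and no sign coming from the entries of $J$ or from reordering wedges ever has to be computed. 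If you correct the identification of $\psi$ and replace your sign analysis by this equivariance argument, your outline collapses onto the paper's proof; without those two repairs, the computation as proposed would fail.
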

\begin{proof}
  Let $\psi: \Pi \rightarrow \Pi$ be the unique linear map such that,
 \[\psi(gv) = \sigma(g) \psi(v), \tag{1}\]
 such that $\psi$ fixes the highest weight vector $v_0= e_1\wedge e_2 \wedge \cdots\wedge e_n \in \Pi$. (We will work with the group of diagonal matrices in $\SL_{2n}(\C)$ as the maximal torus, and the group
  of upper triangular matrices as the Borel subgroup of $\SL_{2n}(\C)$ both of which are left invariant under $\sigma$.) It is easily seen
  that $\psi^2=1$, and we will content ourselves with calculating the trace of $\psi$ (which we expect to find to be $2^n$!), the twisted trace is exactly the same calculation.

  Since $\sigma$ leave $T$-invariant, it operates on characters of $T$, denoted as $\chi\rightarrow \chi^\sigma$. For $i \leq n$, let $\chi_i$ denote the character of $T$ sending $(t_1,\cdots, t_n, \cdots, t_{2n})$
  to $t_i$. Similarly, let $\eta_i$ denote the character of $T$ sending $(t_1,\cdots, t_n, \cdots, t_{2n})$
  to $t_{2n+1-i}$. For $I \subset E= \{1,2,\cdots, n\}$,
  let $\chi_I$ be the character $\chi_I = \prod_I \chi_i$.
  Similarly, define the character $\eta_J$ for $J \subset \{n+1,n+2,\cdots, 2n\}$.

  Any character of $T$ appearing in $\Pi$ is of the form $\chi_I \cdot \eta_J$ for uniquely determined
  $I,J$ with $|I| +|J|=n$.

Let $w_0 = (1,2n)(2,2n-1)\cdots (n,n+1)$ be the element in the Weyl group $\Sym_{2n}$ represented by conjugation of the element $J$ on the diagonal torus $T$.

  By the equation (1), a $\chi$-eigenspace for the torus $T$ goes to the
  $(\chi^{-1})^{w_0}$-eigenspace under $\psi$. Writing $\chi = \chi_I \cdot \eta_J$, we find that
  $\psi$ preserves $\chi = \chi_I \cdot \eta_J$, eigenspace of $T$ if and only if:
\[\chi_I \cdot \eta_J = \chi^{-1}_J \cdot \eta_I^{-1},\]
 equivalently,
 \[\chi_I \cdot \eta_J = \chi_{E-J} \cdot \eta_{E-I},\]
 where we have used the fact that $\chi_E\cdot \eta_E$ is the trivial character on $T$.
 It follows  $\chi = \chi_I \cdot \eta_J$ eigenspace of $T$ is preserved under the action of $\psi$
 if and only if $I= E-J$.

 Clearly, $\chi_I \cdot \eta_{E-I}$ is obtained from the character $\chi_E$ be applying the element
 of $(\Z/2)^n \subset \Sym_{2n}$ which permutes $e_i$ and $e_{2n+1-i}$ for all elements of $I \subset E =
 \{1,\cdots, n\}$, and acts trivially on all other basis elements of $V$.

 Next observe that if for an element $w \in \Sym_{2n}$, $\sigma(w)=w$, as is the case for these elements
 coming from the subgroup $(\Z/2)^n \subset \Sym_{2n}$, from equation (1), it follows that:
 \[\psi(w\cdot v_0) =  w \cdot \psi(v_0) = w\cdot v_0, \]
 where we used the fact that $\psi(v_0) = v_0$.

 To conclude, either $\psi$ permutes two different eigenspaces of $T$, or if it fixes an eigenspace (parametrized by $(\Z/2)^n$) of $T$, it acts by 1 on it.  This completes the proof of the assertion that the character of $\psi$ is $2^n$. The general twisted character too is calculated to be:
 \[ \sum_{I \subset E} \chi_I \cdot \eta_{E-I} .\]
    \end{proof}

\section{The pair $(\SL_{2n}(\C), \Spin_{2n+1}(\C))$}

In this section  we will focus attention on the pair 
$(G,H)=(\SL_{2n}(\C),
\Spin_{2n+1}(\C))$ which are related to each other through fixed points of a diagram automorphism
on the bigger group, to be more precise, the $L$-group of $H$  has an embedding into the $L$-group of $G$,
with image which is the fixed points of a diagram automorphism on the $L$-group of $G$, see \cite{KLP} for the precise statements.

We will follow the notations of the {\sf LiE} software \cite{lie} for denoting representations of 
$\SL_{2n}(\C)$ and $ \Spin_{2n+1}(\C))$.

Let $\varpi_i$ denote the highest weights of the fundamental representations realized on the $i$-th exterior powers of the standard representation $V=\C^{2n}$, thus:
\begin{eqnarray*}
\varpi_1 & = &  e_1 \\
  \varpi_2 & = &  e_1 +e_2\\
  \varpi_3 & = &  e_1 +e_2+e_3\\
  & \cdot & \\
  \varpi_i & = &  e_1+e_2+\cdots+ e_i \\
  & \cdot & \\
  \varpi_{2n-1} & = &  e_1+e_2+\cdots +e_{2n-1}.
\end{eqnarray*}

{\sf LiE} software denotes an irreducible representation
of $\SL_{2n}(\C)$ by $[a_1,a_2,\cdots, a_{2n-1}]$ with $a_i$ integers $\geq 0$, which stands for the irreducible representation
of $\SL_{2n}(\C)$ with highest weight
$$a_1\varpi_1 + a_2\varpi_2+ \cdots + a_{2n-1}\varpi_{2n-1}.$$

Since in our work we are looking only at selfdual representations, we will have:

 $$[a_1,a_2,\cdots, a_{2n-1}] = [a_{2n-1},a_{2n-2},\cdots, a_{1}], $$
thus,
 $$[a_1,a_2,\cdots, a_{2n-1}] = [a_{1},a_{2},\cdots,a_{n-1},  a_n, a_{n-1}, \cdots, a_2, a_{1}]. $$

Next we come to the spin group $\Spin_{2n+1}(\C)$. Let $\varpi_i$, $1 \leq i \leq (n-1)$ denote the highest weights of the fundamental representations realized on the $i$-th exterior powers of the standard representation $V=\C^{2n+1}$ of
$\Spin_{2n+1}(\C)$, and let $\varpi_n$ be the highest weight of the spin representation of
$\Spin_{2n+1}(\C)$ (of dimension $2^n$), thus:
\begin{eqnarray*}
\varpi_1 & = &  e_1 \\
  \varpi_2 & = &  e_1 +e_2\\
  \varpi_3 & = &  e_1 +e_2+e_3\\
  & \cdot & \\
  \varpi_i & = &  e_1+e_2+\cdots + e_i \\
  & \cdot & \\
\varpi_{n-1} & = &  e_1+e_2+\cdots + e_{n-1}\\
  \varpi_n &=& (e_1+e_2+\cdots +e_n)/2.
\end{eqnarray*}

{\sf LiE} software denotes an irreducible representation of $\Spin_{2n+1}(\C)$ by
$[a_1,a_2,\cdots, a_{n}]$ with $a_i$ integers $\geq 0$, which stands for the
irreducible representation of $\Spin_{2n+1}(\C)$ with highest weight
$$a_1\varpi_1 + a_2\varpi_2+ \cdots + a_{n}\varpi_{n}.$$

We note the following  lemma which implies that the transfer of representations between $\SL_{2n}(\C)$ and
$\Spin_{2n+1}(\C)$ preserves central characters (which are the finite order characters by which the center
of these groups operate on the respective representations). The straightforward proof of the lemma will be omitted.

\begin{lemma} \label{central1}
  A selfdual representation  $[a_{1},a_{2},\cdots,a_{n-1},  a_n, a_{n-1}, \cdots, a_2, a_{1}] $
  of $\SL_{2n}(\C)$ has central character
  of order $\leq 2$, and has trivial central character if and only if $a_n$ is even.

  The representation  $[a_{1},a_{2},\cdots,a_{n-1},  a_n] $ of $\Spin_{2n+1}(\C)$ has central character
  of order $\leq 2$, and has trivial central character if and only if $a_n$ is even.
  \end{lemma}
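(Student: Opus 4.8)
The plan is to compute the central characters directly from the highest weights, using the explicit descriptions of the fundamental weights $\varpi_i$ in terms of the coordinates $e_i$ given just above the lemma. For both groups, the center is finite and abelian, and the central character of an irreducible representation with highest weight $\lambda$ is determined by the value $\lambda$ takes on the center; so the entire statement reduces to evaluating a linear functional on a handful of central elements.

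For the $\SL_{2n}(\C)$ case, I would first recall that the center of $\SL_{2n}(\C)$ is cyclic of order $2n$, generated by $\zeta \cdot \mathrm{Id}$ where $\zeta = e^{2\pi i/(2n)}$. A highest weight $\lambda = \sum_j a_j \varpi_j$ acts on this central element by the scalar $\zeta^{N}$ where $N = \sum_j j \, a_j \bmod 2n$ (since $\varpi_j$ evaluated on $\zeta\cdot\mathrm{Id}$ contributes $\zeta^j$). I would then impose the selfduality constraint $a_j = a_{2n-j}$ and simplify $N \bmod 2n$: the symmetric terms $a_j \varpi_j$ and $a_{2n-j}\varpi_{2n-j}$ pair up, and one checks that the only contribution surviving modulo $2$ (equivalently, the only thing controlling whether the central character has order $1$ or $2$) comes from the middle coefficient $a_n$, giving a factor $\zeta^{n\,a_n} = (-1)^{a_n}$. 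This shows the central character has order $\le 2$ and is trivial exactly when $a_n$ is even. The mild bookkeeping obstacle here is verifying that all the off-center coefficients $a_1,\dots,a_{n-1}$ really contribute a square (hence trivially to the order-$2$ quotient) once the symmetry $a_j = a_{2n-j}$ is used; this is where I would be most careful.

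For the $\Spin_{2n+1}(\C)$ case, the center is $\Z/2$, generated by the nontrivial element $z$ in the kernel of $\Spin_{2n+1}(\C) \to \SO_{2n+1}(\C)$. The fundamental representations $\Lambda^i(\C^{2n+1})$ for $i \le n-1$ factor through $\SO_{2n+1}(\C)$, so $z$ acts trivially on each $\varpi_i$ with $i \le n-1$; only the spin representation $\varpi_n$ is a genuine (non-$\SO$) representation, on which $z$ acts by $-1$. Hence $z$ acts on the irreducible of highest weight $\sum_i a_i \varpi_i$ by $(-1)^{a_n}$, which is trivial precisely when $a_n$ is even, again giving order $\le 2$. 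This half is essentially immediate from the known structure of $\Spin$ versus $\SO$ representations.

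The main obstacle, such as it is, is organizational rather than deep: one must set conventions (which central element generates, how $\varpi_j$ pairs against it) so that the two computations land on the \emph{same} invariant $a_n \bmod 2$, matching the transfer statement that $\varpi_n^{\SL}$ corresponds to the spin representation $\varpi_n^{\Spin}$ noted in the example section. I expect no genuine difficulty, which is consistent with the authors' remark that the proof is straightforward and omitted; the content is simply that selfduality kills the $2n$-torsion down to $2$-torsion on the $\SL$ side, and that the spin coordinate $a_n$ is the sole carrier of the central character on both sides.
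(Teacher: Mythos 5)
Your proof is correct, and since the paper explicitly omits the proof as ``straightforward,'' your argument is precisely the intended one: evaluating $\sum_j j\,a_j \bmod 2n$ on the generator of the center of $\SL_{2n}(\C)$, noting that selfduality $a_j = a_{2n-j}$ makes the paired terms contribute $2n\,a_j \equiv 0$, leaving $\zeta^{n a_n} = (-1)^{a_n}$; and on the $\Spin_{2n+1}(\C)$ side, using that the weight $\sum_i a_i\varpi_i$ lies in the root lattice (equivalently, the representation factors through $\SO_{2n+1}(\C)$) if and only if $a_n$ is even. Both computations are carried out correctly, so nothing further is needed.
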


\begin{conj}

  Recall first that by Proposition \ref{mult}, if for three irreducible representations of $\Spin_{2n+1}(\C)$ given by:
\begin{eqnarray*}
  \pi_1 & = & [a_1,a_2,\cdots, a_{n}], \\
  \pi_2 & = & [b_1,b_2,\cdots, b_{n}], \\
  \pi_3 & = & [c_1,c_2,\cdots, c_{n}],
\end{eqnarray*}
with corresponding
selfdual representations of $\SL_{2n}(\C)$,
\begin{eqnarray*}
  V_1 & = & [a_1,a_2,\cdots, a_{n-1}, a_{n}, a_{n-1}, \cdots, a_2,a_1] \\
  V_2 & = & [b_1,b_2,\cdots, b_{n-1}, b_{n},b_{n-1}, \cdots, b_2,b_1], \\
  V_3 & = & [c_1,c_2,\cdots, c_{n-1},c_{n},c_{n-1}, \cdots, c_2,c_1]
\end{eqnarray*}
$$ 1_{\Spin_{2n+1}(\C)} \subset \pi_1 \otimes \pi_2 \otimes \pi_3 \implies 1_{\SL_{2n}(\C)} \subset V_1 \otimes V_2 \otimes V_3.$$

We propose that if at least two of the three $a_n,b_n,c_n$ are nonzero, then the converse holds:

$$ 1_{\SL_{2n}(\C)} \subset V_1 \otimes V_2 \otimes V_3
\iff  1_{\Spin_{2n+1}(\C)} \subset \pi_1 \otimes \pi_2 \otimes \pi_3.$$ 

\end{conj}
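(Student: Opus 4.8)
The plan is to locate the failure of the converse precisely through Corollary \ref{mult-cor} and then to reduce the conjecture to a single fixed-point positivity statement. Write $W = (V_1 \otimes V_2 \otimes V_3)^{\SL_{2n}(\C)}$, so that $m(V_1,V_2,V_3) = \dim W$. Since $V_1 \otimes V_2 \otimes V_3$ is $\sigma$-stable, $\sigma$ acts on $W$ as a linear involution, and the character identity (0) of Section \ref{bc} identifies its trace with the $\Spin$ multiplicity:
\[ m(\pi_1,\pi_2,\pi_3) = \tr(\sigma \mid W) = \dim W^{+} - \dim W^{-}, \]
where $W^{\pm}$ are the $(\pm 1)$-eigenspaces. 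By Corollary \ref{mult-cor} the two multiplicities are congruent modulo $2$ and $m(\pi_1,\pi_2,\pi_3) \geq 0$, so the converse can fail only when $\dim W^{+} = \dim W^{-} > 0$, i.e. when $\sigma$ acts on the nonzero space $W$ with perfectly balanced signature. Hence the whole content of the conjecture is the assertion that \emph{if at least two of $a_n,b_n,c_n$ are nonzero and $W \neq 0$, then $\tr(\sigma \mid W) > 0$.}

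First I would dispose of the central-character obstruction: by Lemma \ref{central1} both multiplicities vanish unless $a_n+b_n+c_n$ is even, which is exactly the compatibility needed for $W \neq 0$, so we may assume this parity throughout. The heart of the matter is to compute the signature of $\sigma$ on $W$, and for this I would pass to a $\sigma$-equivariant combinatorial model. The identity (0) is an instance of a twining-character identity, and by the theory of twining characters on crystal bases (Fuchs--Schellekens--Schweigert; Littelmann; Naito--Sagaki) the $\sigma$-fixed points of the crystal of an $\SL_{2n}(\C)$-module form the crystal of the corresponding $\Spin_{2n+1}(\C) = G_\sigma$-module. Applied to a crystal basis of the invariants $W$ --- equivalently, to the set of Littlewood--Richardson data computing $m(V_1,V_2,V_3)$ --- this makes $\sigma$ act as an honest permutation of a distinguished basis, with eigenvalue $+1$ on each fixed element. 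That there are no signs in this folding is already visible in Section 4, where the twisted character of $\Lambda^n(\C^{2n})$ came out as the cancellation-free sum $\sum_{I \subset E} \chi_I \eta_{E-I}$. Consequently $\tr(\sigma \mid W)$ counts exactly the $\sigma$-fixed (``symmetric'') Littlewood--Richardson objects, and the conjecture becomes the purely combinatorial statement that such a symmetric object exists whenever the set of all such objects is nonempty and two of the spin weights are positive.

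To exhibit a symmetric invariant I would imitate the mechanism of Section 4, where $\sigma$ either swaps two distinct eigenspaces of $T$ or fixes one and acts on it by $+1$. The role of the hypothesis $a_n,b_n \neq 0$ should be to supply a ``$\Lambda^n \otimes \Lambda^n$'' factor inside $V_1 \otimes V_2$; on the $\Spin_{2n+1}(\C)$ side this is the decomposition of the tensor square of the spin representation into $\bigoplus_{k=0}^{n} \Lambda^k(\C^{2n+1})$, whose $\sigma$-fixed character is cancellation-free by the Section 4 computation. My plan is to use this factor to build, from an arbitrary element of $W$, a $\sigma$-symmetric element of the same weight by averaging against the canonical $\sigma$-fixed vectors of $\Lambda^n(\C^{2n})$, and to show the average is nonzero precisely because at least two of the three representations genuinely involve the middle node $\varpi_n$. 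Concretely this is the statement that the $\sigma$-equivariant projection $W \to W^{+}$ does not vanish on $W$.

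The main obstacle is exactly this last positivity: proving that $W^{+}$ is nonzero, rather than merely that $\sigma$ preserves $W$. For at most one nonzero spin weight the statement is false --- there one expects $\sigma$ to act freely on a nonempty symmetric basis, so that $\dim W^{+} = \dim W^{-}$, and these are precisely the exceptions to the converse that force the hypothesis (consistent with the phrase ``almost holds'' in the introduction) --- so any valid argument must use the two nonzero spin weights in an essential, non-formal way. Establishing a general fixed-point theorem for the $\sigma$-action on the Littlewood--Richardson model (a Smith-theory or Lefschetz-type input, or equivalently an involution on Mirkovi\'c--Vilonen cycles with controlled fixed locus) is where I expect the real difficulty to lie; by contrast the reduction and the combinatorial reformulation above are routine once Proposition \ref{mult} and the twining-character formalism are in hand.
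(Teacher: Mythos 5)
The statement you set out to prove is Conjecture 1 of the paper: the authors do not prove it, and the paper contains no proof to compare yours against. What the paper offers is the easy direction (via Proposition \ref{mult}, due to Hong--Shen), the central-character Remark \ref{odd}, and computational evidence from the LiE tables in which the ``missing'' triples all have $a_n=b_n=0$. So the only question is whether your argument closes the conjecture, and it does not. Your opening reduction is correct and is exactly the paper's mechanism: with $W=(V_1\otimes V_2\otimes V_3)^{\SL_{2n}(\C)}$, Proposition \ref{mult} gives $m(\pi_1,\pi_2,\pi_3)=\tr(\sigma\mid W)=\dim W^{+}-\dim W^{-}$, the parity constraint from Lemma \ref{central1} is right, and the conjecture is indeed equivalent to the assertion that $\tr(\sigma\mid W)>0$ whenever $W\neq 0$ and at least two of $a_n,b_n,c_n$ are nonzero.

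Beyond that point there are three genuine gaps. First, your later formulation of the ``main obstacle'' --- proving $W^{+}\neq 0$, equivalently that the projection $W\to W^{+}$ is nonzero --- is not the right statement: since $m(\pi_1',\pi_2',\pi_3')=\dim W^{+}-\dim W^{-}$ is a multiplicity, hence nonnegative, one always has $\dim W^{+}\geq \tfrac{1}{2}\dim W$, so $W^{+}\neq 0$ is automatic once $W\neq 0$; what must be proved is the strict inequality $\dim W^{+}>\dim W^{-}$, and your proposal never engages with that. Second, the passage to a sign-free permutation action of $\sigma$ on a distinguished basis of $W$ does not follow from crystal-level fixed-point theorems (Naito--Sagaki type results live on crystals, not on the linear isotypic space); making $\sigma$ act on an actual basis of invariants without signs requires Satake/MV-cycle or canonical-basis input, which is essentially the content of [HS] and [H] --- and once granted, it yields $m(\pi_1',\pi_2',\pi_3')=\#\{\sigma\text{-fixed basis elements}\}$, so the ``combinatorial reformulation'' is not a reduction to anything easier: the existence of a fixed basis element under the hypothesis is precisely the conjecture restated. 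Third, the proposed mechanism for producing a fixed invariant --- extracting a ``$\Lambda^n\otimes\Lambda^n$ factor'' from $a_n,b_n\neq 0$ and averaging --- is not an argument: $V_1\otimes V_2$ does not factor through $\Lambda^n(\C^{2n})\otimes\Lambda^n(\C^{2n})$ (at best one has Cartan-component embeddings $V_1\subset \Lambda^n\otimes V_{\lambda_1'}$, which point the wrong way for transporting invariants), and no construction of a nonzero $\sigma$-symmetric invariant is given. You acknowledge this yourself; the honest summary is that your write-up correctly repackages the conjecture in the paper's own framework and correctly locates the difficulty, but leaves the conjecture exactly as open as the paper does.
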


Our next conjecture asserts that even with the condition $a_n=b_n=0$,
     for a density one set of  irreducible selfdual representations $(V_1,V_2,V_3)$
     of $\SL_{2n}(\C)$,

     $$ 1_{\SL_{2n}(\C)} \subset V_1 \otimes V_2 \otimes V_3
\iff  1_{\Spin_{2n+1}(\C)} \subset \pi_1 \otimes \pi_2 \otimes \pi_3.$$ 

To make the assertion precise,
  define the height $h(V)$ of a selfdual representation
  $V =  [a_1,a_2,\cdots, a_{n}, \cdots, a_2,a_1]$ of $\SL_{2n}(\C)$
  to be the maximum of the integers $a_i$. 

  Let
  \begin{eqnarray*}
  \pi_1 & = & [a_1,a_2,\cdots, a_{n} ], \\
  \pi_2 & = & [b_1,b_2,\cdots, b_{n}], \\
  \pi_3 & = & [c_1,c_2,\cdots, c_{n}],
\end{eqnarray*}
be three irreducible representations of $\Spin_{2n+1}(\C)$ with $a_n=b_n=0$ with corresponding
selfdual representations of $\SL_{2n}(\C)$,
\begin{eqnarray*}
  V_1 & = & [a_1,a_2,\cdots, a_{n-1}, a_{n}, a_{n-1}, \cdots, a_2,a_1] \\
  V_2 & = & [b_1,b_2,\cdots, b_{n-1}, b_{n},b_{n-1}, \cdots, b_2,b_1], \\
  V_3 & = & [c_1,c_2,\cdots, c_{n-1},c_{n},c_{n-1}, \cdots, c_2,c_1]
\end{eqnarray*}

Call a triple of such irreducible selfdual
representations $(V_1,V_2,V_3)$ of $\SL_{2n}(\C)$ (with $a_n,b_n=0$) to be {\it missing}
if $1_{\SL_{2n}(\C)} \subset V_1 \otimes V_2 \otimes V_3$,
but
$ 1_{\Spin_{2n+1}(\C)} \not \subset \pi_1 \otimes \pi_2 \otimes \pi_3$.

\begin{conj}
  Considering only irreducible selfdual representations
  $V_1,V_2,V_3$ of $\SL_{2n}(\C)$ as above (with $a_n=b_n=0$), 

\[ \lim_{X \rightarrow \infty}
\frac{ |{\rm~ missing ~triples~} (V_1,V_2,V_3) {\rm ~of ~height~}  \leq X|}
     { |(V_1,V_2,V_3) {\rm ~of ~height~}  \leq X,
       1_{\SL_{2n}(\C)} \subset V_1 \otimes V_2 \otimes V_3|} = 0.\]

     On the other hand we expect that there are some sporadic choices $(V_1,V_2)$ for which as many as half --- and no more --- of $V_3$ for which 
$ 1_{\SL_{2n}(\C)} \subset V_1 \otimes V_2 \otimes V_3$, 
     go missing, made precise in the following question:

     \[ \limsup_{V_1,V_2 \rightarrow \infty}
     \frac{ | V_3 {\rm ~such ~that~}
       (V_1,V_2,V_3) {\rm~ is ~ a ~missing ~triple~}|}
     { |V_3 {\rm~such ~that~} 1_{\SL_{2n}(\C)} \subset V_1 \otimes V_2 \otimes V_3|} = 1/2?\]
     Here $\limsup$ is taken over the set of irreducible selfdual representations
     $V_1,V_2$ of $\SL_{2n}(\C)$ as their heights tend to infinity.      (That there is about half of them missing seems to happen for the tensor product
         $[k, 0, \cdots, 0, k] \otimes [\ell, 0, \cdots, 0, \ell]$ although we have not proved this.)

     Presumably, the proof of saturation conjecture due to Knutson and Tau, cf. \cite{KT}  allows one to calculate the asymptotic behavior (as a polynomial in $X$: its degree and the leading coefficient) of 
the number of      irreducible (not necessarily selfdual)
representations $V_1,V_2,V_3$ of $\SL_{2n}(\C)$ of height $\leq X$
with $1_{\SL_{2n}(\C)} \subset V_1 \otimes V_2 \otimes V_3$. Next one would  want to do similar asymptotic
     estimates for number of selfdual representations $V_1,V_2,V_3$ of $\SL_{2n}(\C)$ with $1_{\SL_{2n}(\C)} \subset V_1 \otimes V_2 \otimes V_3$. Our data are quite insufficient to predict these asymptotic behaviours (degree of the polynomial in $X$, and the leading term) for which there might well be an existing theorem, though we have not seen one.

     \end{conj}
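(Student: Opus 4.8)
The plan is to recast the notion of a \emph{missing} triple in terms of the involution $\sigma$ acting on a space of invariants, and then to convert the density statement into a comparison of lattice--point counts in two rational cones. By Corollary~\ref{mult-cor} a triple $(V_1,V_2,V_3)$ is missing exactly when the $\SL_{2n}(\C)$--multiplicity $m(V_1,V_2,V_3)=\dim W$ is positive while the $\Spin_{2n+1}(\C)$--multiplicity $m(\pi_1,\pi_2,\pi_3)$ vanishes, where $W=(V_1\otimes V_2\otimes V_3)^{\SL_{2n}(\C)}$ is the space of invariants. Equation~(3) in the proof of Proposition~\ref{mult} identifies the $\Spin$--multiplicity with $\tr[\sigma \mid W]$, the trace of $\sigma$ on $W$. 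Hence I would first record the clean reformulation: a triple is missing iff $\sigma$ acts on the \emph{nonzero} space $W$ with equally many $+1$ and $-1$ eigenvalues. Because $\dim W$ and $\tr[\sigma\mid W]$ differ by $2\dim(\text{sign part})$, they are congruent mod $2$, so only triples whose $\SL_{2n}(\C)$--multiplicity is even can be missing --- already a strong restriction.

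Next I would set up the counting. By the Knutson--Tao saturation theorem for $A_{2n-1}$ \cite{KT}, the condition $1_{\SL_{2n}(\C)}\subset V_1\otimes V_2\otimes V_3$ is equivalent to the selfdual weight--triple lying in a rational polyhedral cone $C_A$ (the saturated tensor cone). The denominator in the conjecture is therefore the number of lattice points of height $\le X$ in $C_A$, which grows like $c_A X^{D}$ with $D=\dim C_A=3n-2$ and $c_A>0$. Let $C_B$ denote the analogous tensor cone governing $\Q$--positivity of the $\Spin_{2n+1}(\C)$--multiplicity; applying Proposition~\ref{mult} to stretched representations and using $m(\pi')\le m(\pi)$ gives the inclusion $C_B\subseteq C_A$.

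I would then split the missing triples into two families: (i) those lying in $C_A$ but outside $\overline{C_B}$, and (ii) those lying in the interior of $C_B$ for which the genuine $\Spin_{2n+1}(\C)$--multiplicity nevertheless vanishes (saturation holes of type $B_n$). For family (i) the key lemma to establish is that the two cones have the same support, $\overline{C_A}=\overline{C_B}$; this ought to follow from the folding of the Littlewood--Richardson / Berenstein--Zelevinsky descriptions under the diagram automorphism $\sigma$, in the spirit of the saturation results for classical groups cited in the introduction, and is consistent with the twisted--character identity of equation~(0). Granting it, family (i) is confined to $\partial C_A$, a set of dimension $<D$, and so contributes $o(X^{D})$ points.

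The main obstacle is family (ii). Along any ray in the interior of $C_B$ the $\Spin$--multiplicity is an Ehrhart quasi--polynomial with positive leading term, so it can vanish only for boundedly many multiples; ray by ray the holes cluster near the boundary and look negligible. The difficulty is that type $B_n$ genuinely has saturation factor $2$, and the very next displayed formula in this conjecture predicts that on \emph{sporadic} two--parameter slices $(V_1,V_2)$ --- such as $[k,0,\dots,0,k]\otimes[\ell,0,\dots,0,\ell]$ --- as many as half of the admissible $V_3$ are holes. Thus one cannot argue one ray at a time: one must show that these bad slices form a family of dimension $<D$ and, more seriously, control the lower--order (parity) behaviour of $\tr[\sigma\mid W]$ \emph{uniformly} over the full $(3n-2)$--parameter family. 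Concretely this amounts to proving that the $+1$ and $-1$ eigenvalue counts of $\sigma$ on $W$ coincide exactly only for a density--zero set of directions --- an equidistribution statement for the secondary term of the multiplicity that lies well beyond what the stretching asymptotics furnished by saturation can see.
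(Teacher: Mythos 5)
First, a point of order: the statement you are addressing is Conjecture~2 of the paper. The paper itself offers \emph{no proof} of it --- only the computational evidence of Tables~\ref{tab:tableA3}--\ref{tab:tableA4} and the weaker known results recalled in its remarks --- so your attempt can only be judged on its own terms, and on those terms it is not a proof. Your preliminary reductions are correct: by Proposition~\ref{mult} and Corollary~\ref{mult-cor}, the $\Spin_{2n+1}(\C)$-multiplicity equals ${\rm tr}[\sigma\mid W]$ on the invariant space $W=(V_1\otimes V_2\otimes V_3)^{\SL_{2n}(\C)}$, so a missing triple is exactly one with $W\neq 0$ and equal $\pm1$ eigenspaces; and the equality of the saturated cones $\overline{C_A}=\overline{C_B}$ is not a lemma you need to establish by folding Littlewood--Richardson combinatorics --- it is an immediate consequence of the saturation-factor-$2$ results the paper already cites (\cite{HS} Prop.~1.4(c), or \cite{BK} Thm.~5 with \cite{Ku-St} Thm.~4.2), together with Knutson--Tao \cite{KT} on the $A$-side. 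This disposes of your family~(i). But everything up to here merely repackages what the paper records as known.

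The genuine gap is that your family~(ii) \emph{is} the conjecture, and for it you give no argument. The ray-by-ray observation that the stretched multiplicity is a quasi-polynomial with positive leading term yields only finiteness of holes along each fixed ray; it gives no uniformity across the $(3n-2)$-dimensional family, and you concede as much, ending with an unproved ``equidistribution statement for the secondary term.'' A proof that terminates by naming the missing estimate has established nothing beyond the known inputs. Moreover, your proposed fallback --- showing the bad slices $(V_1,V_2)$ form a family of dimension $<D$ --- cannot be the right mechanism in the smallest case $n=2$: there the hypothesis $a_n=b_n=0$ forces \emph{every} admissible pair to have the ``sporadic'' form $[k,0,k]\otimes[\ell,0,\ell]$, so the bad slices are the entire parameter space, and the paper's own Table~\ref{tab:tableA3} shows per-slice missing fractions ($1/4$, $3/9$, $6/16$, $15/36$, $28/64$, \dots) that grow toward $1/2$ with $k,\ell$ rather than decay. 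Any viable proof must explain how the density-zero claim coexists with the $\limsup = 1/2$ phenomenon on these slices (e.g.\ via the precise ordering by height and the relative sizes of the fibers); your cone-counting framework, which treats all directions alike, does not see this and would in fact suggest the opposite conclusion for $n=2$.
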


\vspace{5mm}

\begin{remark} \label{odd}
  A special case of the Conjecture 1 asserts that if $  \pi_1  =  [a_1,a_2,\cdots, a_{n}]$ with $a_n$ {\it odd}, then
  $$ 1_{\SL_{2n}(\C)} \subset V_1 \otimes V_2 \otimes V_3 \iff 1_{\Spin_{2n+1}(\C)} \subset \pi_1 \otimes \pi_2 \otimes \pi_3,$$
  because in this case as $1_{\SL_{2n}(\C)} \subset V_1 \otimes V_2 \otimes V_3$, both $b_n,c_n$ cannot be zero for central character
  reasons (calculated in Lemma \ref{central1}).  
  \end{remark}

\begin{remark}
  With notation as above, for a representation $\pi = [a_1, \cdots, a_n]$ of
  $\Spin_{2n+1}(\C)$, let $\pi(c)$ denote the representation of
  $\Spin_{2n+1}(\C)$ given by $[ca_1,ca_2, \cdots, ca_n]$. Then a weaker form of our Conjecture 1 is true by \cite{HS}, Proposition 1.4 (c): 
  $$ 1_{\SL_{2n}(\C)} \subset V_1 \otimes V_2 \otimes V_3 \iff 1_{\Spin_{2n+1}(\C)} \subset \pi_1(c) \otimes \pi_2(c) \otimes \pi_3(c),$$
  for $c = c_\sigma$, the `saturation factor', known to be 2 in this case. This also follows by combining \cite{BK}, Theorem 5, and \cite{Ku-St} Theorem 4.2.
  Thus our conjecture is a strengthening of these known results. In this regard,
  a conjecture of Kapovich in \cite{Kap}, the Saturation conjecture, part 3, at the very end of his paper,  seems related to Conjecture 1 above, although we do not see any direct relationship between his work and ours since his work is for any reductive group whereas we are dealing with
  reductive groups with a diagram automorphism. Also, his conditions are related to `regular highest weights', i.e., one which
  are not fixed by any nontrivial element of the Weyl group which in our notation for a representation $[a_{1},\cdots,a_{n-1},  a_n, a_{n-1}, \cdots, a_{1}] $ of $\SL_{2n}(\C)$ will mean
  that each of the coefficients $a_i$ are nonzero, whereas our condition is only that $a_n$ is nonzero.

\end{remark}

\begin{remark}
  In this paper, we have tried to understand the relationship of multiplicities for selfdual representations of $\SL_{2n}(\C)$ with those
  of $\Spin_{2n+1}(\C)$. Notice however that $\Sp_{2n}(\C) \subset \SL_{2n}(\C)$ arises
  as the fixed points of a diagram involution defined using the symplectic structure on the underlying vector space $V = \C^{2n}$ which allows for natural choices (up to conjugation by $\Sp_{2n}(\C)$)
  for a Borel subgroup and a maximal torus in the two groups given by taking a basis $\{e_1,e_2,\cdots, e_n, e_{n+1}, e_{n+2},\cdots, e_{2n}\}$ for $V$ with $\langle e_i,e_{2n+1-i}\rangle =1$, and all other
  products zero; the Borel subgroups in both  $\Sp_{2n}(\C)$ and  $\SL_{2n}(\C)$ is defined as the stabilizer of the flag:
  $$\{e_1\} \subset \{e_1,e_2\} \subset \cdots \subset  \{e_1, \cdots, e_n\}  \subset \{e_1, \cdots, e_{n+1}\} \subset \cdots \subset \{e_1, \cdots, e_{2n}\},$$ and the tori in the two groups $\Sp_{2n}(\C)$ and  $\SL_{2n}(\C)$  defined as the stabilizer of the corresponding lines $\{e_i\}$.

  Any highest weight for $\SL_{2n}(\C)$ gives rise to a highest weight for $\Sp_{2n}(\C)$, thus there is a natural map from the set of irreducible finite dimensional representations of $\SL_{2n}(\C)$
  to the set of irreducible finite dimensional representations of $\Sp_{2n}(\C)$, denote this map of representations as $\pi_\lambda \rightarrow V_\lambda$. Unlike our situation,  the map $\pi_\lambda \rightarrow V_\lambda$ from
  the set of irreducible representations of $\SL_{2n}(\C)$
  to the set of irreducible representations of $\Sp_{2n}(\C)$,
    is defined on {\it all} irreducible representations of $\SL_{2n}(\C)$.

  Theorem 23 of \cite{BK} asserts that if  $\pi_{\lambda_1} \otimes \pi_{\lambda_2} \otimes \cdots \otimes \pi_{\lambda_m}$
  has a $\SL_{2n}(\C)$ invariant vector, then so does the representation
  $V_{\lambda_1} \otimes V_{\lambda_2} \otimes \cdots \otimes V_{\lambda_m}$
 of $\Sp_{2n}(\C)$, seems related, but very different from ours! We refer to \cite{Ku} for a survey of many other instances of similar relationships on multiplicities of tensor products on different groups, but different from the one considered in this work.

 If $\pi_\lambda$ is a selfdual representation of $\SL_{2n}(\C)$, given by highest weight
 $\lambda_1 \geq \lambda_2 \geq \cdots \geq  \lambda_n \geq -\lambda_n \geq \cdots \geq -\lambda_1 $ where we assume that
\[\lambda_i \in   \begin{cases}
  \Z^{\geq 0}, & \text{if $ i \leq n-1$} \\
  \frac{1}{2} \Z^{\geq 0}, & \text{ if $i =n$},
\end{cases} \]
then the representation $V_\lambda$ of
 $\Sp_{2n}(\C)$ considered in \cite{BK} has highest weight
 $ 2\lambda_1 \geq 2\lambda_2 \geq \cdots \geq 2\lambda_n$,
 whereas the representation of $\Spin_{2n+1}(\C)$ considered in this paper
 has highest weight $ \lambda_1 \geq \lambda_2 \geq \cdots \geq \lambda_n$.
\end{remark}

Tensor product of two irreducible representations of $\SL_{2n}(\C)$ typically decomposes with high multiplicities.
Here is  a nice class of selfdual representations where multiplicity one holds.

\vspace{5mm}

\begin{example} \label{example1}
  For the representation $V_{\varpi_n}$ of $\SL_{2n}(\C)$,
  \begin{eqnarray*}
  V_{\varpi_n} & = & [0,0,\cdots, 0, 1, 0, \cdots, 0, 0] = \Lambda^n(\C^{2n}), 
  \end{eqnarray*}
  we have
  \[ V_{\varpi_n} \otimes V_{\varpi_n} = V_0 +V_1+ \cdots + V_n, \tag{1}\]
  where
  $V_i$, for $0\leq i \leq n$, are the irreducible selfdual representations of $\SL_{2n}(\C)$ with highest weights
  $\varpi_i + \varpi_{2n-i}$, where we take $\varpi_0=0$. Under the correspondence of irreducible
  selfdual representation of $\SL_{2n}(\C)$ with
  irreducible representations of $\Spin_{2n+1}(\C)$ that we are considering,
the representation $V_i$  of $\SL_{2n}(\C)$ with highest weight
$\varpi_i + \varpi_{2n-i}$ corresponds to the irreducible representation of $\Spin_{2n+1}(\C)$
with highest weight $\varpi_i$ for $i<n$, but for $i=n$,  the representation $V_n$  of $\SL_{2n}(\C)$ with highest weight
$2\varpi_n$ corresponds to the irreducible representation of $\Spin_{2n+1}(\C)$
with highest weight $2\varpi_n$, thus to    the representation $\Lambda^n(V)$ where $V$ is the standard $(2n+1)$ dimensional
representation of $\SO_{2n+1}(\C)$.

Under the correspondence of irreducible
  selfdual representation of $\SL_{2n}(\C)$ with
  irreducible representations of $\Spin_{2n+1}(\C)$ that we are considering,
the representation $V_{\varpi_n}$  of $\SL_{2n}(\C)$ 
  corresponds to the Spinor representation of $\Spin_{2n+1}(\C)$,
  which let's denote by $S_n$. We have the following well-known decomposition for the tensor product of the Spinor
  representation:

  \[ S_n \otimes S_n = V + \Lambda^2(V) + \Lambda^3(V) + \cdots + \Lambda^n(V), \tag{2}\]
  where $V$ is the standard $(2n+1)$ dimensional representation of $\SO_{2n+1}(\C)$, and the
  $\Lambda^i(V)$
  are the fundamental irreducible representations of $\SO_{2n+1}(\C)$ (or, $\Spin_{2n+1}(\C)$)
  of highest weight
  $\varpi_i$ for $i <n$; finally $\Lambda^n(V)$
  is the irreducible representations of $\SO_{2n+1}(\C)$ (or, $\Spin_{2n+1}(\C)$) of highest weight
  $2\varpi_n = (e_1+e_2+\cdots + e_n)$. We thus see that in the decomposition (1) for $\SL_{2n}(\C)$,
  each irreducible component appears with multiplicity 1, and is selfdual,  and the decomposition (1) for $\SL_{2n}(\C)$ and  (2) for 
$\Spin_{2n+1}(\C)$ match perfectly as suggested by Proposition \ref{mult}.
  
\end{example}

\begin{example}
More generally, for the selfdual representations of $\SL_{2n}(\C)$,
\begin{eqnarray*}
  V_a & = & [0,0,\cdots, 0, a, 0, \cdots, 0, 0] {\rm~~ of ~~ highest ~ weight~~} a {\rm~~ times ~ that ~ of~~}
  \Lambda^n(\C^{2n}),  \\
  V_b & = & [0,0,\cdots, 0, b,0, \cdots, 0,0] {\rm~~ of ~~ highest ~ weight~~} b {\rm~~ times ~ that ~ of~~}
  \Lambda^n(\C^{2n}), 
\end{eqnarray*}
(where $a,b$ are positioned at the $n$th place with all other entries zero)  $V_a \otimes V_b$ decomposes as a sum of distinct selfdual irreducible representations of 
$\SL_{2n}(\C)$ each with multiplicity 1. Multiplicity 1, and hence selfduality,
follows from:

\begin{enumerate}

  \item $V_a \otimes V_b$ is realized as space of sections of a line bundle
on $\SL_{2n}(\C)/P_{n,n} \times \SL_{2n}(\C)/P_{n,n}$  where $P_{n,n}$ is the
parabolic in $\SL_{2n}(\C)$ stabilizing an $n$-dimensional subspace of
$\C^{2n}$
 \item $X=\SL_{2n}(\C)/P_{n,n} \times \SL_{2n}(\C)/P_{n,n}$  is a spherical variety
  for $\SL_{2n}(\C)$, i.e., any Borel subgroup in $\SL_{2n}(\C)$ has an open orbit on $X$, for instance because $X$ contains $\GL_{2n}(\C)/  [\GL_n(\C) \times
    \GL_{n}(\C)]$ as an open orbit which is one of the well-known
  spherical varieties for $\SL_{2n}(\C)$.
\end{enumerate}
However, explicit decomposition, as in Example \ref{example1}
seems not clear from this point of view.
The author thanks N. Ressayre for the argument in this example.
\end{example}

\begin{example}
  \label{example:1_0_0}
  For the representation $V_{\varpi_1 +\varpi_{2n-1}}$ of $\SL_{2n}(\C)$,
  \begin{eqnarray*}
    V_{\varpi_1 + \varpi_{2n-1}} & = & [1,0,\cdots, 0,  0, \cdots, 0, 1]
    = {\End} (\C^{2n}) - \C, 
  \end{eqnarray*}
and $W$ any selfdual representation of $\SL_{2n}(\C)$ given by:
  \[ W= [\underline{a}]
  = [a_1,a_2,\cdots,  a_n,  a_{n-1}, \cdots, a_2, a_1], \]
we split the calculation of $ (V_{\varpi_1 + \varpi_{2n-1}}) \otimes W$ into two cases:

  \begin{enumerate}
\item $a_n\not  = 0$. In this case, number of nonzero entries, say $d$, in 
$[\underline{a}]$ is  odd, and 
  $ (V_{\varpi_1 + \varpi_{2n-1}}) \otimes W$
  contains $d$ distinct selfdual representations of $\SL_{2n}(\C)$ which are not $W$ each with multiplicity 1, and
    $W$ itself appears with multiplicity $d$. This follows by calculating tensor product
$ V_{\varpi_1}  \otimes W$ and then $ V_{\varpi_{2n-1}} \otimes ( V_{\varpi_1} \otimes W) $, and noting that 
    $$V_{\varpi_1 + \varpi_{2n-1}} =  {\End} (\C^{2n}) - \C =    V_{\varpi_1} \otimes     V_{\varpi_{2n-1}} - \C.$$

  \item  $a_n = 0$. In this case, number of nonzero entries, say $d$, in 
$[\underline{a}]$ is  even, and 
    $ (V_{\varpi_1 + \varpi_{2n-1}}) \otimes W$
    contains $d$ selfdual representations which are not $W$, and
    $W$ itself appears with multiplicity $d$, by an argument similar to the one used in the first case.

  \end{enumerate}

  In case 1, since the multiplicity of all selfdual constituents of $ (V_{\varpi_1 + \varpi_{2n-1}}) \otimes W$ is odd,
  they appear in the corresponding tensor product $ (V_{\varpi_1} ) \otimes W'$  of the $\Spin_{2n+1}(\C)$ where
    $W'= [a_1,a_2,\cdots,  a_n]$.

    In case 2, the representation $W$ of $\Sp_{2n}(\C)$ appears with even multiplicity in $ (V_{\varpi_1 + \varpi_{2n-1}}) \otimes W$, but
    the corresponding tensor product $ V_{\varpi_1}\otimes W'$
    of the $\Spin_{2n+1}(\C)$ where
    $W'= [a_1,a_2,\cdots,  a_n]$ does not contain $W'$. Because the zero weight space of 
    $ V_{\varpi_1}$ has dimension 1, $ V_{\varpi_1}\otimes W'$ contains $W'$ with multiplicity $\leq 1$. But by
    Proposition \ref{mult}, multiplicity of $W'$ in $ V_{\varpi_1}\otimes W'$
    has the same parity as the multiplicity of the representation $W$ of
$\SL_{2n}(\C)$
    in $ (V_{\varpi_1 + \varpi_{2n-1}}) \otimes W$ which we have analyzed above to be even. Thus the
    multiplicity of $W'$ in $ V_{\varpi_1}\otimes W'$ must be zero. (This must surely have a direct proof!) 
  \end{example}

\section{The pair $(\SL_{2n+1}(\C), \Sp_{2n}(\C))$}

In this section  we will focus attention on the pair 
$(G,H)=(\SL_{2n+1}(\C),
\Sp_{2n}(\C))$ which are related to each other through fixed points of a diagram automorphism
on the bigger group, to be more precise, the $L$-group of $H$  has an embedding into the $L$-group of $G$,
with image which is the fixed points of a diagram automorphism on the $L$-group of $G$, see \cite{KLP} for the precise statements.

We will continue to follow the notations of the {\sf LiE} software for denoting
representations of $\SL_{2n+1}(\C)$ and $ \Sp_{2n}(\C))$.

Let $\varpi_i$ denote the highest weights of the fundamental representations
of $\SL_{2n+1}(\C)$
realized on the $i$-th exterior powers of the standard representation $V=\C^{2n+1}$, thus:
\begin{eqnarray*}
\varpi_1 & = &  e_1 \\
  \varpi_2 & = &  e_1 +e_2\\
  \varpi_3 & = &  e_1 +e_2+e_3\\
  & \cdot & \\
  \varpi_i & = &  e_1+e_2+\cdots+ e_i \\
  & \cdot & \\
  \varpi_{2n} & = &  e_1+e_2+\cdots +e_{2n}.
\end{eqnarray*}

The {\sf LiE} software denotes an irreducible representation of $\SL_{2n+1}(\C)$
by $[a_1,a_2,\cdots, a_{2n}]$ with $a_i$ integers $\geq 0$, which stands for the
irreducible representation of $\SL_{2n+1}(\C)$ with highest weight
$$a_1\varpi_1 + a_2\varpi_2+ \cdots + a_{2n}\varpi_{2n}.$$

Since in our work we are looking only at selfdual representations, we will have:

 $$[a_1,a_2,\cdots, a_{2n}] = [a_{2n},a_{2n-1},\cdots, a_{1}], $$
thus,
 $$[a_1,a_2,\cdots, a_{2n}] = [a_{1},a_{2},\cdots,a_{n-1},  a_n, a_n, a_{n-1}, \cdots, a_2, a_{1}]. $$

Next we come to the symplectic group $\Sp_{2n}(\C)$. Let $\varpi_i$, $1 \leq i \leq (n-1)$ denote the highest weights of the fundamental representations realized on a subspace of the  $i$-th exterior powers of the standard representation $V=\C^{2n}$ of
$\Sp_{2n}(\C)$, thus:
\begin{eqnarray*}
\varpi_1 & = &  e_1 \\
  \varpi_2 & = &  e_1 +e_2\\
  \varpi_3 & = &  e_1 +e_2+e_3\\
  & \cdot & \\
  \varpi_i & = &  e_1+e_2+\cdots + e_i \\
  & \cdot & \\
\varpi_{n-1} & = &  e_1+e_2+\cdots + e_{n-1}\\
  \varpi_n &=& (e_1+e_2+\cdots +e_n).
\end{eqnarray*}

The {\sf LiE} software denotes an irreducible representation
of $\Sp_{2n}(\C)$ by $[a_1,a_2,\cdots, a_{n}]$ with $a_i$ integers $\geq 0$, which stands for the irreducible representation
of $\Sp_{2n}(\C)$ with highest weight
$$a_1\varpi_1 + a_2\varpi_2+ \cdots + a_{n}\varpi_{n}.$$

We note the following lemma
whose straightforward proof will be omitted.

\begin{lemma} \label{central2}
  A selfdual representation  $[a_{1},a_{2},\cdots,  a_n, a_n, \cdots, a_2, a_{1}] $
  of $\SL_{2n+1}(\C)$ has trivial central character.
  
  The representation  $[a_{1},a_{2},\cdots,a_{n-1},  a_n] $ of $\Sp_{2n}(\C)$ has central character
  of order $\leq 2$, and has trivial central character if and only if $ a_1+ a_3 + a_5 + \cdots $ is even.
  \end{lemma}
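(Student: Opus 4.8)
The plan is to treat the two assertions separately, in each case reducing the central character to the scalar by which the (finite) center acts on a highest weight vector, which is well defined by Schur's lemma. So the entire computation comes down to identifying the center of each group and tracking the action of its generator through the passage from the fundamental weights $\varpi_i$ to the coordinate functions $e_j$ on the torus.

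For $\SL_{2n+1}(\C)$, I would use that the center consists of the scalar matrices $\zeta I$ with $\zeta^{2n+1}=1$, a cyclic group of \emph{odd} order $2n+1$. Since $\varpi_i$ is the highest weight of $\Lambda^i(\C^{2n+1})$, on which $\zeta I$ acts by $\zeta^i$, the scalar $\zeta I$ acts on the highest weight vector of $[a_1,\ldots,a_{2n}]$ by $\zeta^{\sum_i i a_i}$; it then suffices to show $\sum_i i a_i \equiv 0 \pmod{2n+1}$. Substituting $i \mapsto 2n+1-i$ and invoking the selfduality relation $a_i = a_{2n+1-i}$ yields $2\sum_i i a_i = (2n+1)\sum_i a_i$, whence $\sum_i i a_i \equiv 0 \pmod{2n+1}$ because $\gcd(2,2n+1)=1$; thus the central character is trivial. (A slicker variant worth recording: a selfdual representation has central character $\chi$ with $\chi=\chi^{-1}$, hence $\chi^2=1$, and a character of order dividing $2$ on a group of odd order is automatically trivial.)

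For $\Sp_{2n}(\C)$ the center is $\{\pm I\}$, of order $2$, so the central character has order $\leq 2$ without any computation. The crux will be rewriting the highest weight $\lambda = \sum_{i=1}^n a_i \varpi_i$ in the $e_j$-coordinates: with $\varpi_i = e_1 + \cdots + e_i$, the coefficient of $e_j$ in $\lambda$ is $a_j + a_{j+1} + \cdots + a_n$, and summing over $j$ gives $\sum_{i=1}^n i a_i$. Since $-I$ is the torus element with every $t_j = -1$, it acts on a weight vector of weight $\sum_j m_j e_j$ by $(-1)^{\sum_j m_j}$, hence on the highest weight vector by $(-1)^{\sum_i i a_i}$. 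Reducing the exponent modulo $2$ gives $\sum_i i a_i \equiv a_1 + a_3 + a_5 + \cdots \pmod 2$, so the central character is trivial exactly when $a_1 + a_3 + a_5 + \cdots$ is even, as claimed.

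There is no genuine obstacle here, which is precisely why the authors label the proof straightforward and omit it. The only points deserving a line of care are the correct identification of the two centers and the elementary change of basis between the $\varpi_i$ and the $e_j$; the accompanying number theory, namely $\gcd(2,2n+1)=1$ in the $\SL_{2n+1}(\C)$ case and reduction mod $2$ in the $\Sp_{2n}(\C)$ case, is immediate.
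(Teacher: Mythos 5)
Your proof is correct, and since the paper explicitly omits the proof of this lemma as straightforward, your argument supplies exactly the kind of computation the authors had in mind: identify the center ($\mu_{2n+1}$ scalars for $\SL_{2n+1}(\C)$, $\{\pm I\}$ for $\Sp_{2n}(\C)$), evaluate the central character on the highest weight via $\sum_i i a_i$, and reduce modulo $2n+1$ (using selfduality $a_i = a_{2n+1-i}$ and $\gcd(2,2n+1)=1$) in the first case and modulo $2$ in the second. Both your main computation and the slicker variant you record --- a selfdual representation has central character of order dividing $2$, which must be trivial on a center of odd order --- are sound, and the weight-coordinate bookkeeping $\sum_i i a_i \equiv a_1 + a_3 + a_5 + \cdots \pmod 2$ for $\Sp_{2n}(\C)$ is exactly right.
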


\begin{conj}
Recall first that by Proposition \ref{mult}, if for three irreducible representations of $\Sp_{2n}(\C)$ given by:
\begin{eqnarray*}
  \pi_1 & = & [a_1,a_2,\cdots, a_{n}], \\
  \pi_2 & = & [b_1,b_2,\cdots, b_{n}], \\
  \pi_3 & = & [c_1,c_2,\cdots, c_{n}],
\end{eqnarray*}
with corresponding
selfdual representations of $\SL_{2n+1}(\C)$,
\begin{eqnarray*}
  V_1 & = & [a_1,a_2,\cdots, a_{n-1}, a_{n}, a_n, a_{n-1}, \cdots, a_2,a_1] \\
  V_2 & = & [b_1,b_2,\cdots, b_{n-1}, b_{n},b_n, b_{n-1}, \cdots, b_2,b_1], \\
  V_3 & = & [c_1,c_2,\cdots, c_{n-1},c_{n},c_n, c_{n-1}, \cdots, c_2,c_1]
\end{eqnarray*}
$$ 1_{\Sp_{2n}(\C)} \subset \pi_1 \otimes \pi_2 \otimes \pi_3 \implies 1_{\SL_{2n+1}(\C)} \subset V_1 \otimes V_2 \otimes V_3.$$

We propose that if at least two of the three $a_1,b_1,c_1$ are nonzero, and the central character of
the representation $\pi_1 \otimes \pi_2 \otimes \pi_3$ of $\Sp_{2n}(\C)$ is trivial,
then the converse holds:

$$ 1_{\SL_{2n+1}(\C)}
\subset V_1 \otimes V_2 \otimes V_3
\implies 1_{\Sp_{2n}(\C)}
\subset \pi_1 \otimes \pi_2 \otimes \pi_3.$$ 
\end{conj}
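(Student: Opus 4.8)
The forward implication needs nothing new: applying Proposition~\ref{mult} and Corollary~\ref{mult-cor} to the pair $(G,G_\sigma)=(\SL_{2n+1}(\C),\Sp_{2n}(\C))$ gives $m(\pi_1,\pi_2,\pi_3)\le m(V_1,V_2,V_3)$, so a nonzero $\Sp_{2n}(\C)$-invariant in $\pi_1\otimes\pi_2\otimes\pi_3$ forces a nonzero $\SL_{2n+1}(\C)$-invariant in $V_1\otimes V_2\otimes V_3$. For the substantive converse the plan is to recast it as a statement about the \emph{sign} of a trace. Write $M_{\SL}=m(V_1,V_2,V_3)$ and $M_{\Sp}=m(\pi_1,\pi_2,\pi_3)$. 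Each $V_i$ is $\sigma$-selfdual and so carries the canonical intertwiner $\psi_i$ built exactly as the map $\psi$ of the twisted-character section above, normalized to fix the highest weight vector $v_{0,i}$ and satisfying $\psi_i^2=1$; put $\psi=\psi_1\otimes\psi_2\otimes\psi_3$. In the proof of Proposition~\ref{mult} the integers $m$ and $n$ count the copies of the trivial and of the sign character of $\tilde G$; since the trivial (resp.\ sign) isotypic line is precisely a $+1$ (resp.\ $-1$)-eigenline of $\psi$ on the space of invariants, we obtain
\[ M_{\Sp}=m-n=\tr\big(\psi \mid (V_1\otimes V_2\otimes V_3)^{\SL_{2n+1}(\C)}\big),\qquad M_{\SL}=m+n=\dim (V_1\otimes V_2\otimes V_3)^{\SL_{2n+1}(\C)}. \]
Thus the conjecture is equivalent to the assertion that, under the two hypotheses, the involution $\psi$ on the nonzero invariant space has \emph{strictly positive} trace.

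The next step is to evaluate this trace as an honest count of fixed points. For a simply-laced group with a diagram automorphism the folding philosophy (Lusztig's global/canonical basis, equivalently the Littelmann path model and its $\sigma$-folding) provides a canonical basis $\mathcal{B}$ of the space of invariants that is permuted by $\sigma$ as an involution, and the normalization $\psi_i(v_{0,i})=v_{0,i}$ forces $\psi$ to act by $+1$ on each fixed basis vector---exactly the phenomenon exhibited in the twisted-character section, where a fixed eigenspace was shown to carry the action $+1$ and never $-1$. Consequently $\tr\psi=\#\{b\in\mathcal{B}:\psi b=b\}\ge 0$, and the converse becomes the purely combinatorial statement that this $\sigma$-action on $\mathcal{B}$ has at least one fixed point whenever $\mathcal{B}\ne\emptyset$. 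The triviality of the central character of $\pi_1\otimes\pi_2\otimes\pi_3$ is the obvious necessary condition for $\mathcal{B}$ to support a fixed point at all: by Lemma~\ref{central2} it must hold for $M_{\Sp}\ne 0$, and if it fails there is nothing to prove.

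To manufacture a fixed point I would exploit the hypothesis that at least two of $a_1,b_1,c_1$ are nonzero, that is, at least two of the $\pi_i$ involve the standard representation $\C^{2n}$ of $\Sp_{2n}(\C)$ (equivalently at least two of the $V_i$ contain the adjoint constituent of highest weight $\varpi_1+\varpi_{2n}$, namely $\End(\C^{2n+1})-\C$). Two complementary routes present themselves. The first is explicit: from a nonzero $\SL_{2n+1}(\C)$-invariant one extracts a generalized PRV component of $V_1\otimes V_2\otimes V_3$, which, being assembled canonically from extreme weight data, is already $\psi$-stable, and the standard-representation factors then allow one to symmetrize it---using the symplectic pairing on the $\C^{2n}$ factors---into a manifestly $\psi$-fixed invariant. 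The second is inductive and is well-matched to the hypothesis: the coefficient $a_1$ governs precisely the outer $\sigma$-orbit $\{1,2n\}$ of nodes, so peeling it off reduces $(\SL_{2n+1}(\C),\Sp_{2n}(\C))$ to $(\SL_{2n-1}(\C),\Sp_{2n-2}(\C))$; one branches the $V_i$ through the Levi $\SL_{2n-1}(\C)$, uses $a_1\ne 0$ to keep the branching nonzero and to transport the central-character hypothesis down one rank, and invokes the statement in rank $n-1$, the small-rank base cases being checked directly with {\sf LiE}.

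The hard part will be exactly the production of the $\sigma$-fixed basis element, i.e.\ ruling out uniformly the balanced, fixed-point-free case $m=n\ge 1$. Positivity of $\tr\psi$ is a signed global count and is genuinely \emph{not} implied by $\mathcal{B}\ne\emptyset$: the {\sf LiE} data show that the balance $m=n$ really does occur once the hypothesis on $a_1,b_1,c_1$ is dropped, and this is the source both of the ``missing triples'' and of the conjectural $\limsup=1/2$ recorded above. Turning the heuristic ``a standard-representation factor breaks the $\sigma$-balance'' into a theorem---controlling the induced involution on the Littlewood--Richardson data finely enough to exhibit a fixed tableau under the two hypotheses, and securing the requisite base cases---is the essential step on which a complete proof must rest, and it is also where the condition that at least \emph{two} (rather than one) of the $a_1,b_1,c_1$ be nonzero should become indispensable.
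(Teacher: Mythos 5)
This statement is a \emph{conjecture} in the paper: the authors do not prove the converse implication at all. They establish only the forward implication (as an immediate consequence of Proposition~\ref{mult} and Corollary~\ref{mult-cor}) and support the converse with {\sf LiE} computations, which is also exactly how your first paragraph proceeds, correctly. So the only question is whether your proposal for the converse constitutes a proof, and it does not.

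The genuine gap is that your central reduction is a restatement of the conjecture, not progress toward it. From the proof of Proposition~\ref{mult} one has directly that $\sigma$ acts on the invariant space $(V_1\otimes V_2\otimes V_3)^{\SL_{2n+1}(\C)}$ with trace $m-n=M_{\Sp}$; your operator $\psi=\psi_1\otimes\psi_2\otimes\psi_3$ is precisely this action, so ``$\tr\psi>0$ whenever the invariant space is nonzero'' is literally the conjectured implication, with no content added. The same is true of the canonical-basis version: the fact that $\sigma$ permutes a canonical basis of the invariant space and that the fixed vectors contribute $+1$ (so that $\tr\psi$ counts fixed basis elements) is the Hong--Shen mechanism underlying Proposition~\ref{mult} itself, and the paper's data (the ``missing triples,'' the conjectured $\limsup=1/2$) show that fixed-point-freeness genuinely occurs once the hypotheses are dropped; so the whole difficulty is to produce a fixed point under the two hypotheses, and neither of your routes does this. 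The PRV route asserts without argument that a generalized PRV component yields a $\psi$-stable, indeed $\psi$-fixed, invariant line; nothing in the PRV construction makes the resulting invariant canonical enough to be $\psi$-stable, and the ``symplectic symmetrization'' is not defined. The inductive route fails structurally: branching $V_i$ to a Levi $\SL_{2n-1}(\C)$ does not convert an $\SL_{2n+1}(\C)$-invariant in a triple tensor product into an $\SL_{2n-1}(\C)$-invariant in a triple tensor product of the same shape, and no mechanism is given for transporting either hypothesis (the nonvanishing of two of $a_1,b_1,c_1$, or the central character condition of Lemma~\ref{central2}) through such a reduction. You say as much yourself in the final paragraph; as it stands the proposal is a research program whose essential step --- ruling out the balanced case $m=n\geq 1$ --- is exactly the conjecture.
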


We note the following curious proposition.

\begin{prop}
  If the central character of the representation $\pi_1 \otimes \pi_2 \otimes \pi_3,$ (given by Lemma \ref{central2})
  of $\Sp_{2n}(\C)$ is nontrivial,
  then for the corresponding representation $V_1 \otimes V_2 \otimes V_3$ of $\SL_{2n+1}(\C)$,
  $$m_{\SL_{2n+1}(\C)}[V_1, V_2\otimes V_3] \in 2 \Z.$$
\end{prop}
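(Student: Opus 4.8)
The plan is to reduce the evenness assertion to the parity congruence of Corollary \ref{mult-cor} and then annihilate the relevant $\Sp_{2n}(\C)$-multiplicity by a central character obstruction. Throughout, recall the bookkeeping of Proposition \ref{mult} applied to the pair $(G,G_\sigma)=(\SL_{2n+1}(\C),\Sp_{2n}(\C))$: in the notation of that proposition the selfdual $\SL_{2n+1}(\C)$-representations $V_1,V_2,V_3$ play the role of $\pi_1,\pi_2,\pi_3$, while the $\Sp_{2n}(\C)$-representations $\pi_1,\pi_2,\pi_3$ of this section play the role of $\pi_1',\pi_2',\pi_3'$.

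First I would rewrite the quantity of interest as a triple-product multiplicity. Since $V_1$ is selfdual, $V_1^\ast\cong V_1$, so
$$m_{\SL_{2n+1}(\C)}[V_1,V_2\otimes V_3]=\dim\Hom_{\SL_{2n+1}(\C)}(V_1,V_2\otimes V_3)=\dim\Hom_{\SL_{2n+1}(\C)}(1,V_1\otimes V_2\otimes V_3),$$
which is exactly the multiplicity $m(V_1,V_2,V_3)$ of the trivial representation of $\SL_{2n+1}(\C)$ in $V_1\otimes V_2\otimes V_3$. Applying the parity statement of Corollary \ref{mult-cor} to the transferred triple then gives
$$m_{\SL_{2n+1}(\C)}[V_1,V_2\otimes V_3]=m(V_1,V_2,V_3)\equiv m(\pi_1,\pi_2,\pi_3)\bmod 2,$$
where $m(\pi_1,\pi_2,\pi_3)$ is the multiplicity of the trivial representation of $\Sp_{2n}(\C)$ in $\pi_1\otimes\pi_2\otimes\pi_3$. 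Thus it suffices to prove that this last multiplicity vanishes.

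Next I would invoke the hypothesis on the central character. The center of $\Sp_{2n}(\C)$ is $\{\pm I\}$, and because $\Sp_{2n}(\C)$ is connected, its center acts on $\pi_1\otimes\pi_2\otimes\pi_3$ (and hence on every irreducible constituent) by a single scalar, namely the product of the central characters of $\pi_1,\pi_2,\pi_3$. By assumption this product is the nontrivial character of $\{\pm I\}$, so $-I$ acts by $-1$. The trivial representation has trivial central character, hence it cannot occur as a constituent, giving $m(\pi_1,\pi_2,\pi_3)=0$. Combined with the congruence above, this forces $m_{\SL_{2n+1}(\C)}[V_1,V_2\otimes V_3]\equiv 0\bmod 2$, as claimed.

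The argument is essentially forced once the transfer is set up, and I do not anticipate a genuine obstacle. The only point demanding care is the notational identification of which group plays the role of $G$ and which of $G_\sigma$ in Proposition \ref{mult} and Corollary \ref{mult-cor}, together with the observation that the parity statement of the corollary is precisely the bridge that converts the central-character vanishing on the $\Sp_{2n}(\C)$ side into the evenness statement on the $\SL_{2n+1}(\C)$ side (the explicit parity description of the central character in Lemma \ref{central2} is not even needed, only its nontriviality).
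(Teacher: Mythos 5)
Your proof is correct and is essentially the paper's own argument: the paper likewise observes that the nontrivial central character forces $m_{\Sp_{2n}(\C)}[\pi_1,\pi_2\otimes\pi_3]=0$ and then invokes the parity congruence of Corollary \ref{mult-cor} to conclude evenness on the $\SL_{2n+1}(\C)$ side. You merely spell out two steps the paper leaves implicit (the identification of the Hom-multiplicity with the triple-product multiplicity via selfduality, and the scalar action of $-I$), which is fine.
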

\begin{proof}
  Because of central character consideration,  $m_{\Sp_{2n}(\C)}[\pi_1, \pi_2 \otimes \pi_3] =0$. On the other hand, by
  Corollary \ref{mult-cor},
  $$  m_{\SL_{2n+1}(\C)}[V_1, V_2\otimes V_3] \equiv m_{\Sp_{2n}(\C)}[\pi_1, \pi_2 \otimes \pi_3] \bmod 2.  $$
  The proof of the proposition is therefore clear.
  \end{proof}

\section{Some questions relating $B_n, C_n$ and $A_{2n}, A_{2n+1}$}

In many ways although the groups $\Spin_{2n+1}(\C)$ and $\Sp_{2n}(\C)$
are similar, their tensor products tends to be quite different. However, we noticed experimentally an interesting class of examples where the tensor products agree. We pose this as a
question. In this question, we parametrize representations of both the groups --- just as before ---
by an $n$-tuple of non-negative integers
$\underline{a}
= [a_1,a_2,\cdots, a_n]$, denoted as $V_{\underline{a}}$ for $\Spin_{2n+1}(\C)$ and $W_{\underline{a}}$ for 
$\Sp_{2n}(\C)$ as done in the last section.

\vspace{5mm}

\begin{question}
    For integers $k \geq 0$, let $V_k = [k,0,0,\cdots, 0]$ be the irreducible representations of  
$\Spin_{2n+1}(\C)$, and $W_k = [k,0,0,\cdots, 0]$ be the  irreducible representations of  
$\Sp_{2n}(\C)$. Then for any two non-negative integers $k,\ell$,
both the representations $V_k \otimes V_\ell$
of $\Spin_{2n+1}(\C)$
and the representations $W_k \otimes W_\ell$
of $\Sp_{2n}(\C)$ decompose with multiplicity 1. Further,
\end{question}

\begin{enumerate}

\item If $n>2$, the irreducible constituents of the two tensor products
are the same, i.e., $V_{\underline{a}} \subset V_k \otimes V_\ell$ if and only if
$W_{\underline{a}} \subset W_k \otimes W_\ell$.

\item If $n=2$, an irreducible representation $V_{\underline{a}}$ for ${\underline{a}} = [a_1,a_2]$
   appears in $ V_k \otimes V_\ell$ if and only if
   $W_{\underline{a}}$ for ${\underline{a}}
   = [a_1, 2a_2] $ appears in  $ W_k \otimes W_\ell$
   (and the second co-ordinate of $\underline{a}$ for any
   $W_{\underline{a}} \subset W_k \otimes W_\ell$ is even).
   \end{enumerate}
\vspace{5mm}

The assertion that the representations $W_k \otimes W_\ell$
of $\Sp_{2n}(\C)$ decompose with multiplicity 1 is a simple consequence of the fact that for the
natural action of $\Sp_{2n}(\C)$ on $\C^{2n} \oplus \C^{2n}$,
for any Borel subgroup
$ B \subset G = \Sp_{2n}(\C) \times (\C^\times \times \C^\times)$,
where the two $\C^\times$ correspond to scaling on the two copies of $\C^{2n}$ (inside $\C^{2n} \oplus \C^{2n}$),
$B$ has an open orbit on $\C^{2n} \oplus \C^{2n}$. But this argument does not
apply, at least immediately,
to prove multiplicity 1 for $V_k \otimes V_\ell$; however, it is one of the multiplicity 1 pairs of Stembridge in \cite{St}.
\vspace{5mm}

The above question is closely related to the following question on special linear groups.

\begin{question}
For integers $k \geq 0$, let $V_k = [k,0,0,\cdots, 0, 0, k]$ be the irreducible selfdual representation of  
$\SL_{2n}(\C)$, and $W_k = [k,0,0,\cdots, 0,0, k]$ be the  irreducible selfdual representation of  
$\SL_{2n+1}(\C)$. Then for any two non-negative integers $k,\ell$,
irreducible selfdual representations of $\SL_{2n}(\C)$ which appear with odd multiplicity
inside the representation $V_k \otimes V_\ell$
of $\SL_{2n}(\C)$ are in bijective correspondence with
irreducible selfdual representations of $\SL_{2n+1}(\C)$ which appear with odd multiplicity
inside the representation $W_k \otimes W_\ell$
of $\SL_{2n+1}(\C)$ given by the correspondence:

$$[a_1,a_2,\cdots, a_n, a_{n-1}, \cdots, a_1] \longleftrightarrow [a_1,a_2,\cdots, a_n, a_n, a_{n-1}, \cdots, a_1],$$
if $n>2$, in which case all $a_i, i>2$ are zero. If $n=2$, then 
$$[a_1,2a_2, a_1] \longleftrightarrow [a_1,a_2, a_2, a_1].$$

Further, under the above correspondence, multiplicities (we are only looking at odd multiplicities) are preserved.

\end{question}

\begin{remark}
  We note that multiplicity free tensor products (of $\sigma$-invariant representations)  for $G$ are especially useful for our purposes,
  since they exactly reflect a similar
  decomposition of the tensor product for $G_\sigma$. Thus one is led to ask for 
  any simple group $G$, all pairs of irreducible representations $V,W$ of $G$ such that $V \otimes W$ decomposes with multiplicity 1. In fact this question has been
  fully answered by J. Stembridge in \cite{St}, and the answer is closely related to a classical problem of classifying spaces of the form $G/P_1 \times G/P_2$
  which are spherical,i.e., $B$ has an open orbit on it. A stronger question 
  of classifying representations whose restriction to a maximal torus decomposes with multiplicity $\leq 1$ has also been known, see the introduction
  of the paper \cite{St} for a very small list (due to R. Howe in \cite{Ho}, Theorem 4.6.3.); one can also use the work \cite{BZ} to give a proof
  of the theorem of Howe.
\end{remark}

\section{Sample computations}
\label{sec:observations}

\subsection*{Tables representing sample computations}
\label{sec:tables}

We use the following notations in the two tables below (Table \ref{tab:tableA3}  
and \ref{tab:tableA5} ).
\begin{itemize}
\item The row and column headers
  $\{a_1,a_2\}$ in Table 1 represent the irreducible selfdual representations of
  $\SL_4(\C)$ which were earlier represented by $[a_1,a_2,a_1] = a_1\varpi_1 + a_2\varpi_2 + a_{1}\varpi_{3},$
  with $\varpi_i$ the $i$-th fundamental weight.
  Similarly, the row and column headers  ${\{a_1,a_2,a_3\}}$ in Table 2 represent the selfdual
  representations $[a_1,a_2,a_3,a_2,a_1] = a_1\varpi_1 + a_2\varpi_2+ \cdots + a_{1}\varpi_{5}$ of $\SL_6(\C)$.
  
\item 
  For the cell appearing in the row corresponding to a representation
  ${V}$ and column corresponding to a representation ${W}$, there
  are 4 numbers: $\begin{matrix} n_1&n_4\\n_2&n_3
  \end{matrix}$, where
  \begin{itemize}
  \item[$n_1$:] number of representations of $\SL_{2n}(\C)$ appearing in the tensor product 
    ${V} \otimes {W}$.
  \item[$n_2$:] number of selfdual representations of $\SL_{2n}(\C)$ appearing
    in the tensor product ${V} \otimes {W}$.
  \item[$n_3$:] number of representations of $\Spin_{2n+1}(\C)$ appearing in the
    tensor product of the representations of $\Spin_{2n+1}(\C)$ corresponding to
    ${V'}$ and ${W'}$.
  \item[$n_4$:] number of selfdual representations of $\SL_{2n}(\C)$ in the tensor product
    ${V}\otimes {W}$, such that the corresponding
    representation of $\Spin_{2n+1}(\C)$ is not appearing in the tensor
    product of the corresponding representations.
  \end{itemize}
\end{itemize}

\newpage 

\begin{table}[!htbp]
  \centering
\makebox[\linewidth]
  {\footnotesize
     \begin{tabular}{|r||cc|cc|cc|cc|cc|cc|cc|cc|cc|cc|}
       \hline
       &\multicolumn{2}{c|}{\bf \{1,0\}} & \multicolumn{2}{c|}{\bf \{1,1\}} & \multicolumn{2}{c|}{\bf \{2,2\}} & \multicolumn{2}{c|}{\bf \{2,8\}} & \multicolumn{2}{c|}{\bf \{5,0\}} & \multicolumn{2}{c|}{\bf \{5,9\}} & \multicolumn{2}{c|}{\bf \{8,0\}} \\ \hline 
       \multirow{2}{*}{\bf \{0,1\}} & 4 & 0 & 6 & 0 & 6 & 0 & 6 & 0 & 4 & 0 & 6 & 0 & 4 & 0 \\ 
       & 2 & 2 & 4 & 4 & 4 & 4 & 4 & 4 & 2 & 2 & 4 & 4 & 2 & 2 \\ \hline 
       \multirow{2}{*}{\bf \{0,8\}} & 5 & 0 & 13 & 0 & 55 & 0 & 202 & 0 & 87 & 0 & 395 & 0 & 165 & 0 \\ 
       & 3 & 3 & 7 & 7 & 19 & 19 & 60 & 60 & 19 & 19 & 75 & 75 & 25 & 25 \\ \hline 
       \multirow{2}{*}{\bf \{1,2\}} & 11 & 0 & 22 & 0 & 48 & 0 & 59 & 0 & 43 & 1 & 79 & 0 & 43 & 1 \\ 
       & 5 & 5 & 8 & 8 & 16 & 16 & 19 & 19 & 11 & 10 & 21 & 21 & 11 & 10 \\ \hline 
       \multirow{2}{*}{\bf \{1,6\}} & 11 & 0 & 24 & 0 & 87 & 0 & 229 & 0 & 138 & 1 & 398 & 0 & 217 & 1 \\ 
       & 5 & 5 & 10 & 10 & 25 & 25 & 57 & 57 & 26 & 25 & 72 & 72 & 31 & 30 \\ \hline 
       \multirow{2}{*}{\bf \{2,0\}} & 8 & 1 & 18 & 0 & 40 & 1 & 45 & 0 & 27 & 3 & 55 & 0 & 27 & 3 \\ 
       & 4 & 3 & 6 & 6 & 12 & 11 & 13 & 13 & 9 & 6 & 13 & 13 & 9 & 6 \\ \hline 
       \multirow{2}{*}{\bf \{2,4\}} & 13 & 0 & 34 & 0 & 109 & 0 & 229 & 0 & 164 & 2 & 386 & 0 & 229 & 2 \\ 
       & 5 & 5 & 12 & 12 & 27 & 27 & 51 & 51 & 30 & 28 & 66 & 66 & 33 & 31 \\ \hline 
       \multirow{2}{*}{\bf \{2,5\}} & 13 & 0 & 34 & 0 & 117 & 0 & 291 & 0 & 189 & 0 & 499 & 0 & 300 & 0 \\ 
       & 5 & 5 & 12 & 12 & 27 & 27 & 57 & 57 & 31 & 31 & 81 & 81 & 38 & 38 \\ \hline 
       \multirow{2}{*}{\bf \{3,0\}} & 8 & 1 & 22 & 0 & 63 & 2 & 91 & 0 & 62 & 6 & 145 & 0 & 64 & 6 \\ 
       & 4 & 3 & 6 & 6 & 17 & 15 & 21 & 21 & 16 & 10 & 25 & 25 & 16 & 10 \\ \hline 
       \multirow{2}{*}{\bf \{3,3\}} & 13 & 0 & 38 & 0 & 126 & 0 & 272 & 0 & 194 & 0 & 470 & 0 & 282 & 0 \\ 
       & 5 & 5 & 12 & 12 & 26 & 26 & 50 & 50 & 32 & 32 & 72 & 72 & 36 & 36 \\ \hline 
       \multirow{2}{*}{\bf \{7,0\}} & 8 & 1 & 22 & 0 & 104 & 2 & 393 & 2 & 196 & 15 & 829 & 0 & 400 & 28 \\ 
       & 4 & 3 & 6 & 6 & 20 & 18 & 53 & 51 & 36 & 21 & 87 & 87 & 64 & 36 \\ \hline 
     \end{tabular}
  }
  \vskip2mm
  \caption{$A_3-B_2$ sample computations}
  \label{tab:tableA3}
\end{table}

\subsubsection*{Comments on table \ref{tab:tableA3}}
\label{sec:table_A3_comments}

\begin{itemize}
\item Observe that $n_4$ is often zero in this table as predicted by Conjecture
  1. For example, the column corresponding to $\{5,9\}$ has $n_4$ identically
  zero by Remark \ref{odd}. In fact Remark \ref{odd} allows us to showcase many
  pairs of representations of $\SL_4(\C)$ for which $n_4$ is zero, but our work
  has tried to uncover how often $n_4$ is nonzero, and the reader will agree
  with us by looking at this table that almost all entries in the table have
  either $n_4$ equal to zero, or (comparatively) very small value for $n_4$
  except for the bottom corner entry corresponding to tensor product:
  $$[8,0,8]  \otimes [7,0,7],$$
  of irreducible selfdual representations of $\SL_4(\C)$ which decomposes with
  400 irreducible representations (with various multiplicities not discussed in
  the table), out of which only 64 are selfdual. The corresponding tensor
  product $[8,0] \otimes [7,0]$ of $\Spin_5(\C)$ has 36 entries, thus there are
  $n_4=28$ many irreducible selfdual representations of $\SL_4(\C)$ which go
  `missing' in $\Spin_5(\C)$.
\end{itemize}

\newpage 

\begin{table}[htbp]
  \centering
\makebox[\linewidth]
  {\footnotesize
     \begin{tabular}{|r||cc|cc|cc|cc|cc|cc|cc|cc|cc|cc|}
       \hline
       &\multicolumn{2}{c|}{\bf \{0,0,4\}} & \multicolumn{2}{c|}{\bf \{1,0,0\}} & \multicolumn{2}{c|}{\bf \{2,2,2\}} & \multicolumn{2}{c|}{\bf \{5,1,0\}} & \multicolumn{2}{c|}{\bf \{5,9,9\}} & \multicolumn{2}{c|}{\bf \{8,5,0\}} & \multicolumn{2}{c|}{\bf \{8,7,0\}} \\ \hline 
       \multirow{2}{*}{\bf \{0,0,8\}} & 35 & 0 & 5 & 0 & 2101 & 0 & 734 & 0 & 27193 & 0 & 10907 & 0 & 15096 & 0 \\ 
       & 35 & 35 & 3 & 3 & 143 & 143 & 62 & 62 & 675 & 675 & 271 & 271 & 316 & 316 \\ \hline 
       \multirow{2}{*}{\bf \{0,6,2\}} & 409 & 0 & 15 & 0 & 8451 & 0 & 5581 & 4 & 184421 & 0 & 60745 & 14 & 91263 & 20 \\ 
       & 57 & 57 & 5 & 5 & 289 & 289 & 191 & 187 & 1837 & 1837 & 891 & 877 & 1153 & 1133 \\ \hline 
       \multirow{2}{*}{\bf \{1,5,5\}} & 907 & 0 & 29 & 0 & 13664 & 0 & 10179 & 0 & 314922 & 0 & 96685 & 0 & 148136 & 0 \\ 
       & 95 & 95 & 7 & 7 & 370 & 370 & 261 & 261 & 2674 & 2674 & 1105 & 1105 & 1436 & 1436 \\ \hline 
       \multirow{2}{*}{\bf \{2,4,1\}} & 935 & 0 & 30 & 0 & 7362 & 0 & 5975 & 0 & 129494 & 0 & 45822 & 0 & 68068 & 0 \\ 
       & 63 & 63 & 6 & 6 & 236 & 236 & 183 & 183 & 1386 & 1386 & 626 & 626 & 778 & 778 \\ \hline 
       \multirow{2}{*}{\bf \{2,5,8\}} & 1290 & 0 & 31 & 0 & 20874 & 0 & 16476 & 0 & 588058 & 0 & 175590 & 5 & 267962 & 9 \\ 
       & 110 & 110 & 7 & 7 & 526 & 526 & 368 & 368 & 3800 & 3800 & 1702 & 1697 & 2234 & 2225 \\ \hline 
       \multirow{2}{*}{\bf \{2,6,0\}} & 700 & 0 & 22 & 1 & 10898 & 4 & 7899 & 37 & 245139 & 0 & 74960 & 118 & 116170 & 145 \\ 
       & 54 & 54 & 6 & 5 & 310 & 306 & 235 & 198 & 1977 & 1977 & 978 & 860 & 1262 & 1117 \\ \hline 
       \multirow{2}{*}{\bf \{3,0,2\}} & 212 & 0 & 15 & 0 & 1921 & 0 & 1170 & 3 & 11143 & 0 & 5401 & 0 & 5923 & 0 \\ 
       & 36 & 36 & 5 & 5 & 121 & 121 & 80 & 77 & 305 & 305 & 165 & 165 & 165 & 165 \\ \hline 
       \multirow{2}{*}{\bf \{4,3,1\}} & 1090 & 0 & 30 & 0 & 8732 & 0 & 7242 & 0 & 164516 & 0 & 55438 & 0 & 84360 & 0 \\ 
       & 66 & 66 & 6 & 6 & 252 & 252 & 202 & 202 & 1506 & 1506 & 680 & 680 & 846 & 846 \\ \hline 
       \multirow{2}{*}{\bf \{4,4,0\}} & 925 & 0 & 22 & 1 & 9977 & 7 & 7667 & 40 & 200069 & 0 & 64022 & 105 & 99316 & 116 \\ 
       & 59 & 59 & 6 & 5 & 303 & 296 & 241 & 201 & 1609 & 1609 & 840 & 735 & 1040 & 924 \\ \hline 
       \multirow{2}{*}{\bf \{9,3,0\}} & 760 & 0 & 22 & 1 & 12922 & 10 & 9488 & 46 & 420994 & 0 & 110432 & 145 & 178101 & 155 \\ 
       & 54 & 54 & 6 & 5 & 310 & 300 & 242 & 196 & 2484 & 2484 & 1228 & 1083 & 1569 & 1414 \\ \hline 
       \multirow{2}{*}{\bf \{9,8,7\}} & 1751 & 0 & 31 & 0 & 58765 & 0 & 44816 & 0 & 2128066 & 0 & 743281 & 0 & 1084932 & 0 \\ 
       & 125 & 125 & 7 & 7 & 777 & 777 & 526 & 526 & 8242 & 8242 & 3549 & 3549 & 4642 & 4642 \\ \hline 
     \end{tabular}
  }
  \vskip2mm
  \caption{$A_5-B_3$ sample computations}
  \label{tab:tableA5}
\end{table}

\subsubsection*{Comments on table \ref{tab:tableA5}}
\label{sec:table_A5_comments}
\begin{itemize}

\item Observe that just as in table 1, $n_4$ is often zero in this table too
  as predicted by Conjecture
  1. For example, the columns corresponding to $\{5,9,9\}$ and $\{9,8,7\}$ has
  $n_4$ identically
  zero by Remark \ref{odd}. In fact if the last entry is odd, either in a row or in a column,
  then $n_4$ identically
  zero by Remark \ref{odd}.
  
\item In this table too we find that 
  either $n_4$ equal to zero, or (comparatively) very small value for $n_4$
  except for the bottom corner entry corresponding to tensor product:
  $$[8,7,0,7,8]  \otimes [9,3,0,3,9],$$
  of irreducible selfdual representations of $\SL_6(\C)$ which decomposes with
  178101 irreducible representations (with various multiplicities not discussed in
  the table), out of which only 1569 are selfdual. The corresponding tensor
  product $[8,7,0] \otimes [9,3,0]$ of $\Spin_7(\C)$ has 1414 entries, thus there are
  $n_4=155$ many irreducible selfdual representations of $\SL_4(\C)$ which go
  `missing' in $\Spin_7(\C)$. There are similar conclusions of 
  for representations  $V=[a_1,a_2,a_3,a_2,a_1]$ and $W=[b_1,b_2,b_3,b_2,b_1]$
  of $\SL_6(\C)$ when for both, the middle terms $a_3=b_3=0$.
\item If $v=[1,0,0,0,1]$, then for any $w=[b_1,b_2,b_3,b_2,b_1]$ there is
  exactly one representation is missing if and only if $b_3=0$. In such case,
  the missing representation is necessarily $w$. This is a special case of the
  Example \ref{example:1_0_0} above.
  
\end{itemize}

\subsubsection*{Special case of representations of the form $[m,0,0,0,m]$}
\label{sec:special_case_m-0-0}

Here, we consider the tensor product of the representations of the form
$V_m=[m,0,0,0,m]$ and $V_n=[n,0,0,0,n]$ of $\SL_6(\C)$. Here is a summary of some
of the observations made regarding  the tensor product
$V_m\otimes V_n$ of $\SL_6(\C)$ and the  corresponding tensor product of the representations
$W_m=[m,0,0]$ and $W_n=[n,0,0]$  of  $\Spin_7(\C)$.

\begin{itemize}
\item total number of distinct irreducible representations in $V_m \otimes V_n$
  is at most $(m+1)^3$, for all $n\leq 2m$. Further, for $n=m$, this number is
  equal to $\frac{m\left(2m^2 + 1\right)}{3}$.
\item total number of distinct selfdual representations in $V_m \otimes V_n$ =
  $(n+1)^2$, for all $n\leq m$. 
\item total number of distinct selfdual representations in $V_m \otimes V_n$
  missing in the corresponding tensor product in $\Spin_7(\C)$ is =
  $\frac{n(n+1)}{2}$ for all $n\leq m$. 
\item $V_p \subset V_m \otimes V_n$ (where $V_i=[i,0,0,0,i]$ is an irreducible
  representation of $\SL_6(\C)$) if and only if $ m-n \leq p \leq m+n$.
\item $W_p \subset W_m \otimes W_n$ (where $W_i=[i,0,0]$ is an irreducible
  representation of $\Spin_7(\C)$) if and only if $ m-n \leq p \leq m+n$ and $p$
  has the same parity as $m+n$.

\end{itemize}
  
\vspace{5mm}

Our next table is the summary of some results about the tensor products for the
selfdual irreducible representations of $\SL_5(\C)$ using the notation
$\{a_1,a_2\} = [a_1,a_2,a_2,a_1] = a_1\varpi_1 + a_2\varpi_2+ a_2\varpi_3 +
a_{1}\varpi_{4}$, otherwise we use the same notation as in earlier tables,
except with the important difference that now $n_2$ represents the number of
selfdual representations of $\SL_5(\C)$ represented say by $[c_1,c_2,c_2,c_1]$
with $c_1$ of the same parity as that of $a_1+b_1$ which is the central
character condition which appears in Lemma \ref{central2}.  The table now
compares tensor products of irreducible selfdual representations of $\SL_5(\C)$
with those of $\Sp_4(\C)$.

\begin{table}[htbp]
  \centering
\makebox[\linewidth]
  {\footnotesize
     \begin{tabular}{|r||cc|cc|cc|cc|cc|cc|cc|cc|cc|cc|}
       \hline
       &\multicolumn{2}{c|}{\bf \{0,1\}} & \multicolumn{2}{c|}{\bf \{0,4\}} & \multicolumn{2}{c|}{\bf \{0,7\}} & \multicolumn{2}{c|}{\bf \{5,0\}} & \multicolumn{2}{c|}{\bf \{5,9\}} & \multicolumn{2}{c|}{\bf \{6,2\}} & \multicolumn{2}{c|}{\bf \{8,0\}} & \multicolumn{2}{c|}{\bf \{8,9\}} \\ \hline 
       \multirow{2}{*}{\bf \{0,1\}} & 12 & 1 & 20 & 1 & 20 & 1 & 14 & 0 & 51 & 0 & 51 & 0 & 14 & 0 & 51 & 0 \\ 
       & 4 & 3 & 4 & 3 & 4 & 3 & 3 & 3 & 5 & 5 & 5 & 5 & 3 & 3 & 5 & 5 \\ \hline 
       \multirow{2}{*}{\bf \{0,5\}} & 20 & 1 & 725 & 10 & 1776 & 15 & 349 & 0 & 6268 & 0 & 2214 & 2 & 883 & 0 & 8092 & 1 \\ 
       & 4 & 3 & 25 & 15 & 36 & 21 & 12 & 12 & 52 & 52 & 36 & 34 & 19 & 19 & 60 & 59 \\ \hline 
       \multirow{2}{*}{\bf \{0,7\}} & 20 & 1 & 985 & 10 & 3648 & 28 & 419 & 0 & 14462 & 0 & 3706 & 2 & 1435 & 0 & 19676 & 3 \\ 
       & 4 & 3 & 25 & 15 & 64 & 36 & 12 & 12 & 88 & 88 & 45 & 43 & 24 & 24 & 104 & 101 \\ \hline 
       \multirow{2}{*}{\bf \{0,9\}} & 20 & 1 & 1035 & 10 & 5342 & 28 & 439 & 0 & 24562 & 0 & 4974 & 2 & 1765 & 0 & 34396 & 5 \\ 
       & 4 & 3 & 25 & 15 & 64 & 36 & 12 & 12 & 132 & 132 & 48 & 46 & 25 & 25 & 156 & 151 \\ \hline 
       \multirow{2}{*}{\bf \{2,0\}} & 14 & 0 & 36 & 0 & 36 & 0 & 27 & 0 & 131 & 0 & 117 & 0 & 27 & 0 & 131 & 0 \\ 
       & 3 & 3 & 4 & 4 & 4 & 4 & 6 & 6 & 9 & 9 & 9 & 9 & 6 & 6 & 9 & 9 \\ \hline 
       \multirow{2}{*}{\bf \{2,4\}} & 51 & 0 & 1210 & 3 & 2872 & 4 & 881 & 0 & 9544 & 0 & 3129 & 0 & 1764 & 0 & 12748 & 0 \\ 
       & 5 & 5 & 32 & 29 & 44 & 40 & 21 & 21 & 66 & 66 & 47 & 47 & 32 & 32 & 78 & 78 \\ \hline 
       \multirow{2}{*}{\bf \{2,5\}} & 51 & 0 & 1514 & 3 & 4111 & 5 & 1072 & 0 & 14051 & 0 & 4169 & 0 & 2404 & 0 & 19181 & 0 \\ 
       & 5 & 5 & 32 & 29 & 59 & 54 & 22 & 22 & 85 & 85 & 53 & 53 & 36 & 36 & 101 & 101 \\ \hline 
       \multirow{2}{*}{\bf \{3,0\}} & 14 & 0 & 92 & 0 & 100 & 0 & 62 & 0 & 469 & 0 & 332 & 0 & 64 & 0 & 471 & 0 \\ 
       & 3 & 3 & 6 & 6 & 6 & 6 & 10 & 10 & 16 & 16 & 15 & 15 & 10 & 10 & 16 & 16 \\ \hline 
       \multirow{2}{*}{\bf \{8,5\}} & 51 & 0 & 3239 & 0 & 11894 & 3 & 2427 & 0 & 42336 & 0 & 12161 & 0 & 8168 & 0 & 60498 & 0 \\ 
       & 5 & 5 & 41 & 41 & 87 & 84 & 36 & 36 & 166 & 166 & 90 & 90 & 75 & 75 & 206 & 206 \\ \hline 
       \multirow{2}{*}{\bf \{9,7\}} & 51 & 0 & 3829 & 0 & 17522 & 0 & 2697 & 0 & 64425 & 0 & 17995 & 0 & 11628 & 0 & 90169 & 0 \\ 
       & 5 & 5 & 41 & 41 & 104 & 104 & 36 & 36 & 221 & 221 & 99 & 99 & 80 & 80 & 251 & 251 \\ \hline 
     \end{tabular}
  }
  \vskip2mm
  \caption{$A_4-C_2$ sample computations}
  \label{tab:tableA4}
\end{table}

\subsubsection*{Comments on table \ref{tab:tableA4}}
\label{sec:table_A4_comments}
\begin{itemize}

\item Observe that just as in table 1, and table 2, $n_4$ is often zero in this table too
  as predicted by Conjecture
  3. For example, the columns corresponding to $\{5,9\}$ or row corresponding to $\{9,7\}$
  has $n_4$ identically
  zero by Remark \ref{odd}. In fact if the first entry is odd, either in a row or in a column,
  then $n_4$ identically
  zero by Remark \ref{odd}.
  
\item In this table too we find that 
  either $n_4$ equal to zero, or (comparatively) very small value for $n_4$
  except for the bottom corner entry corresponding to tensor product where
  both row and column vector have first entry 0, for example
  $$[0,7,7,0]  \otimes [0,9,9,0],$$
  of irreducible selfdual representations of $\SL_5(\C)$ which decomposes with
  5342 irreducible representations (with various multiplicities not discussed in
  the table), out of which only 64 are selfdual. The corresponding tensor
  product $[0,7] \otimes [0,9]$ of $\Sp_4(\C)$ has 28 entries, thus there are
  $n_4=28$ many irreducible selfdual representations of $\SL_4(\C)$ which go
  `missing' in $\Spin_7(\C)$. There are similar conclusions of 
  for representations  $V=[a_1,a_2,a_2,a_1]$ and $W=[b_1,b_2,b_2,b_1]$
  of $\SL_5(\C)$ when for both, the first terms $a_1=b_1=0$.
\end{itemize}

\vspace{1cm}

\noindent{\bf Acknowledgement:} The authors thank CS Rajan for bringing the
paper of Kapovitch \cite{Kap} and that of S. Kumar \cite{Ku2} to their notice.
The authors also  thank J. Hong, S. Kumar and N. Ressayre for very useful comments on this work.

\end{document}